\theoremstyle{plain}
\newtheorem{prop}{Proposition}[section]
\newtheorem*{prop*}{Proposition}
\newtheorem{thm}[prop]{Theorem}
\newtheorem*{thm*}{Theorem}
\newtheorem{cor}[prop]{Corollary}
\newtheorem{lem}[prop]{Lemma}
\newtheorem*{convention*}{Convention}
\newtheorem{thmintro}{Theorem}
\theoremstyle{definition}
\newtheorem*{defn*}{Definition}
\newtheorem{defn}[prop]{Definition}
\newtheorem{rem}[prop]{Remark}
\newtheorem*{scholium*}{Scholium}
\newtheorem{example}[prop]{Example}
\newtheorem*{example*}{Example}
\theoremstyle{remark}
\newtheorem*{claim*}{Claim}
\newcommand{\ro}{\varrho}
\newcommand{\se}{\subseteq}
\newcommand{\lra}{\longrightarrow}
\newcommand{\hooklongrightarrow}{\lhook\joinrel\longrightarrow}
\newcommand{\twoheadlongrightarrow}{\relbar\joinrel\twoheadrightarrow}
\DeclareMathOperator{\sig}{\epsilon}
\newcommand{\inv}{^{-1}}
\newcommand{\sL}{\mathscr{L}}
\newcommand{\NN}{\mathbf{N}}
\newcommand{\RR}{\mathbf{R}}
\newcommand{\RRo}{\mathbf{R}_{\sig}}
\newcommand{\ZZ}{\mathbf{Z}}
\newcommand{\pr}{\mathbf{P}}
\newcommand{\dr}{\mathbf{F}}
\newcommand{\odr}{\mathbf{F}^{\mathrm{or}}}
\DeclareMathOperator{\ori}{Or}
\newcommand{\eu}{\mathcal{E}}
\newcommand{\eub}{\mathcal{E}_\mathrm{b}}
\newcommand{\coc}{A}
\newcommand{\coco}{A^{\mathrm{or}}}
\newcommand{\pcoc}{P\!A}
\newcommand{\sul}{E_{\mathrm{Sullivan}}}
\newcommand{\smi}{E_{\mathrm{Smillie}}}
\newcommand{\ssul}{E_{\mathrm{Sullivan}}^{\mathrm{simpl}}}
\newcommand{\ssmi}{E_{\mathrm{Smillie}}^{\mathrm{simpl}}}
\newcommand{\itu}{{IT}}
\newcommand{\SL}{\mathbf{SL}}
\newcommand{\GL}{\mathbf{GL}}
\newcommand{\PSL}{\mathbf{PSL}}
\newcommand{\PGL}{\mathbf{PGL}}
\newcommand{\lw}{L^\infty_\mathrm{w*}}
\newcommand{\mes}{\sL^\infty}
\newcommand{\mesw}{\sL^\infty_\mathrm{w*}}
\newcommand{\bor}{\mathrm{Bor}}
\newcommand{\hb}{\mathrm{H}_\mathrm{b}}
\newcommand{\hh}{\mathrm{H}}
\newcommand{\hhb}{\mathrm{H}_\mathrm{(b)}}
\newcommand{\bu}{\bullet}
\newcommand{\bdd}{_\mathrm{b}}
\begin{document}
\title{The norm of the Euler class}
\author{Michelle Bucher}
\address{Universit\' e de Gen\`eve, 1211 Gen\`eve, Switzerland}
\author{Nicolas Monod}
\address{EPFL, 1015 Lausanne, Switzerland}
\begin{abstract}
We prove that the norm of the Euler class~$\eu$ for flat vector bundles is $2^{-n}$ (in even dimension $n$, since it vanishes in odd dimension). This shows that the Sullivan--Smillie bound considered by Gromov and Ivanov--Turaev is sharp. We construct a new cocycle representing $\eu$ and taking only the two values~$\pm 2^{-n}$; a null-set obstruction prevents any cocycle from existing on the projective space. We establish the uniqueness of an antisymmetric representative for $\eu$ in bounded cohomology.
\end{abstract}
\thanks{Supported in part by the Swiss National Science Foundation}
\maketitle

\section{Introduction}


Let $G$ be a topological group and $\beta\in \hh^\bu(G, \RR)$ a cohomology class. While $\hh^\bu$ denotes the general (``continuous'') cohomology of topological groups (see e.g.~\cite{Wigner73}), we shall mostly be interested in the case where $G$ is a Lie group and $\beta$ corresponds to a characteristic class.

The \textbf{norm} $\|\beta\|$ is by definition the infimum of the sup-norms of all cocycles representing $\beta$ in the classical bar-resolution; thus
$$\| \beta \| = \inf_{f\in\beta}  \|f\|_\infty \in \RR_{\geq 0} \cup \{+ \infty \}$$
(which does not depend on any particular variant of the bar-resolution: homogeneous, inhomogeneous, measurable, smooth, etc.).

\medskip

This norm was introduced by Gromov in~\cite{Gromov} and has important applications since it gives \emph{a priori}-bounds for characteristic numbers; for instance, this explains Milnor--Wood inequalities and in that sense refers back to Milnor~\cite{Milnor58}, compare also~\cite{Wood71, Dupont, Gromov, Bucher-Gelander08, Bucher-GelanderARX}. Further motivations to study this norm come from the Hirzebruch--Thurston--Gromov proportionality principles~\cite{Hirzebruch58,Thurston_unpublished,Gromov} and from the relation to the minimal volume of manifolds \emph{via} the simplicial volume~\cite{Gromov}.

\medskip

However, the norm of only very few cohomology classes is known to this day: the K\"ahler class of Hermitian symmetric spaces in degree two~\cite{Domic-Toledo, Clerc-Orsted03}, the Euler class in $\GL_2^+(\RR)\times \GL_2^+(\RR)$ in degree four~\cite{Bucher08}, and the volume form of hyperbolic $n$-space (in top-degree~$n$)~\cite{Gromov, Thurston_unpublished}, though the latter norm is only explicit in low dimension. In this article, we obtain the norm of the Euler class of flat vector bundles, which was known only for~$n=2$:

\begin{thmintro}\label{thm:norm}
Let $\eu$ be the Euler class in $\hh^n(\GL_n^+(\RR), \RR)$, $n$ even.

Then $\|\eu\| = 2^{-n}$.
\end{thmintro}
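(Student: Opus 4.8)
The plan is to establish the two inequalities $\|\eu\|\le 2^{-n}$ and $\|\eu\|\ge 2^{-n}$ separately, the second being the genuinely new one. For the upper bound I would exhibit an explicit bounded cocycle representing $\eu$ of sup-norm $2^{-n}$. The natural model is the $\GL_n^+(\RR)$-space $S^{n-1}$ of rays in $\RR^n$: to $n+1$ points $x_0,\dots,x_n\in S^{n-1}$ in general position one attaches $2^{-n}$ times a $\{\pm1\}$-valued equivariant ``orientation'' quantity assembled from the signs of the determinants $\det(x_{i_1},\dots,x_{i_n})$. One then checks that this is a cocycle, that its class is $\eu$ (by matching it with the classical obstruction-theoretic, i.e.\ Sullivan--Smillie, description of the Euler class of a triangulated flat bundle), and that it is bounded by $2^{-n}$ — which holds because it takes \emph{only} the values $\pm 2^{-n}$, away from a null set of degenerate configurations. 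Here one records the null-set obstruction: the cochain does not descend to $\RR P^{n-1}$ (replacing $x_i$ by $-x_i$ negates a determinant), so the orientation double cover $S^{n-1}$ is forced. This already gives $\|\eu\|\le 2^{-n}$ since $c$ represents $\eu$ with $\|c\|_\infty=2^{-n}$.

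For the lower bound I would \emph{not} try to realise the ratio with manifolds or homology classes: products of hyperbolic surfaces with $\bigoplus\pi_i^*E_i$ give only the suboptimal constant $2^{-n}/C$ with $C>1$, because of the binomial coefficient appearing in the cross-norm estimate for simplicial volume (and symmetrically for homology cross-products). Instead I would argue on the cochain side via \emph{uniqueness of the antisymmetric representative}. Work in a resolution of $\RR$ by $L^\infty$-functions on powers of an amenable $\GL_n^+(\RR)$-boundary, so that it computes $\hb^\bu(\GL_n^+(\RR))$ with its canonical seminorm — which coincides with $\|\cdot\|$ as defined above. The point is that the invariants in degree $n-1$ are rigidly constrained, so that $\eu$ admits a \emph{unique} antisymmetric cocycle representative $c$ in this resolution, namely the one above, with $|c|=2^{-n}$ almost everywhere and hence $\|c\|_\infty=2^{-n}$.

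Granting this, the lower bound is immediate: the alternation operator $\mathrm{Alt}$ on the resolution is a norm-nonincreasing chain map homotopic to the identity, so for any bounded cocycle $f$ representing $\eu$ the cochain $\mathrm{Alt}(f)$ is again an antisymmetric cocycle representing $\eu$; by uniqueness $\mathrm{Alt}(f)=c$, whence $\|f\|_\infty\ge\|\mathrm{Alt}(f)\|_\infty=\|c\|_\infty=2^{-n}$. Taking the infimum over all representatives $f$ gives $\|\eu\|\ge 2^{-n}$, and together with the upper bound, $\|\eu\|=2^{-n}$.

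The main obstacle is the uniqueness statement. One must find a boundary and resolution in which (i) the action is amenable, so that the $L^\infty$-complex genuinely computes $\hb^\bu$ isometrically, and (ii) there are no nonzero antisymmetric bounded coboundaries in degree $n$ — equivalently, the antisymmetric part of the degree-$(n-1)$ invariants vanishes. On the full flag variety this is a finite orbit computation: a flag carries no orientation, so the ``$\operatorname{sign}$''-type invariant available on the sphere does not survive, killing the would-be antisymmetric coboundary; but one must then reconcile this with the sphere model used for the explicit formula and transport the seminorm correctly through the comparison maps. A secondary technical point is to verify that the explicit cocycle really attains $\pm2^{-n}$ (not merely that it is bounded by $2^{-n}$), since the sharpness of the lower bound rests on $|c|$ equalling $2^{-n}$ on a set of positive measure.
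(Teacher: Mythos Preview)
Your upper-bound strategy is fine and matches the paper: an explicit $\{\pm 2^{-n}\}$-valued equivariant cocycle does the job, and identifying it with the Sullivan--Smillie description is exactly how the paper pins down the proportionality constant.

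The lower-bound argument, however, has a genuine gap. You want a boundary $X$ on which (i) the $L^\infty$-resolution computes $\hb^\bu$ \emph{isometrically}, and (ii) the antisymmetric cocycle is \emph{unique} (no nonzero $\GL_n$-equivariant cochains in degree $n-1$). These two requirements are in tension and cannot be satisfied simultaneously. On the projective space (or the sphere) requirement~(ii) does hold---there is no nonzero equivariant map $(\pr(\RR^n))^n\to\RRo$---but requirement~(i) fails: the point stabiliser is a maximal parabolic containing a copy of $\GL_{n-1}(\RR)$, hence is not amenable for $n\ge 3$, so the $L^\infty$-complex on $\pr(\RR^n)$ is not known to realise the canonical seminorm. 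Conversely, on the full flag variety $\dr(\RR^n)=G/B$ the action \emph{is} amenable and the resolution is isometric, but your claim that ``this is a finite orbit computation'' is false: for $n\ge 3$ the diagonal action on $(\dr(\RR^n))^n$ has continuous moduli (dimension count: $n\cdot\binom{n}{2}>n^2-1$), and there is no reason for the space of equivariant bounded cochains in degree $n-1$ to vanish. The paper says this explicitly: passing to flags ``comes at the cost of losing the uniqueness of the cocycle since this space \ldots\ supports many coboundaries.'' So your $\mathrm{Alt}$ argument collapses at the step ``by uniqueness $\mathrm{Alt}(f)=c$''.

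The paper's substitute for global uniqueness is a \emph{pointwise} vanishing: one exhibits a specific hereditarily spanning $(n+1)$-tuple of complete flags $(F_0,\dots,F_n)$, built from $e_0,e_1,\dots,e_n$, with the property that for each $i$ there is $g_i\in\GL_n(\ZZ)$ with $\det g_i=-1$ fixing every $F_j$ with $j\neq i$. This forces $db(F_0,\dots,F_n)=0$ for \emph{every} $\PGL_n(\ZZ)$-equivariant cochain $b$, while the cocycle itself takes the value $\pm1$ there. One then transfers to the lattice $\PGL_n(\ZZ)$ (isometrically) and compares the $L^\infty$- and $\ell^\infty$-resolutions to conclude $\|[q\coc]\bdd\|=1$. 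The key idea you are missing is precisely this: rather than killing all coboundaries globally, find a single witness tuple where they all vanish for symmetry reasons.
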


More precisely, the (real) Euler class of flat bundles is usually considered as an element in $\hh^n(\GL_n^+(\RR)^\delta, \RR)$, where $\GL_n^+(\RR)^\delta$ is the structure group endowed with the \emph{discrete} topology (so that $\hh^\bu$ reduces to ordinary Eilenberg--MacLane cohomology). There is a unique ``continuous'' class $\eu \in \hh^n(\GL_n^+(\RR), \RR)$ mapping to that ``discrete'' class and it has the same norm (as follows e.g. from the existence of cocompact lattices, by transfer).

\medskip

Based on a simplicial cocycle by Sullivan and Smillie ~\cite{Sullivan76, Smillie_unpublished}, Ivanov and Turaev obtained the upper bound of $\|\eu\| \leq 2^{-n}$ by exhibiting a cocycle with precisely this sup-norm~\cite{Ivanov-Turaev}. By definition, any cocycle provides an upper bound. It is much more difficult to obtain lower bounds because there is no known general method to control the \emph{bounded} coboundaries by which equivalent cocycles may differ, except in degree two, where the double ergodicity of Poisson boundaries leads to resolutions without any $2$-coboundaries~\cite{Burger-Monod1,Burger-Monod3}.

We decompose the lower bound problem into two parts:

\medskip

\textbf{(i)}~\itshape The norm $\|\beta\|$ is equivalently defined as the infimum over all pre-images $\beta\bdd$ in bounded cohomology $\hb^\bu$ of the corresponding semi-norm~$\|\beta\bdd\|$. Can one find an optimal representative~$\beta\bdd$?\upshape

\smallskip

\textbf{(ii)}~\itshape Compute the semi-norm $\|\beta\bdd\|$.\upshape

\bigskip

Concerning point~(i), there can in general be an infinite-dimensional space of pre-images $\beta_b$ for $\beta$. Even for the case at hand, it is not known whether $\eu$ admits a unique pre-image, and indeed the space $\hb^n(\GL^+_n(\RR), \RR)$ has not yet been determined (bounded cohomology remains largely elusive). We shall circumvent this difficulty by using that the Euler class of an oriented vector bundle is \emph{antisymmetric} in the sense that an orientation-reversal changes its sign. Here is the corresponding re-phrasing for the class $\eu$ in group cohomology:

Since inner automorphisms act trivially on cohomology, the canonical action of $\GL_n(\RR)$ upon $\hh^\bu(\GL_n^+(\RR), \RR)$ factors through the order-two quotient group $\GL_n(\RR)/\GL_n^+(\RR)$ (recalling that $n$ is even). Accordingly, we have a canonical decomposition of $\hh^\bu(\GL_n^+(\RR), \RR)$ into eigenspaces for the eigenvalues $1, -1$. Any class in those eigenspaces will be called \textbf{symmetric}, respectively \textbf{antisymmetric}; thus $\eu$ is an example of the latter. The same discussion applies to the bounded cohomology $\hb^\bu(\GL_n^+(\RR), \RR)$. Now we address~(i) using also a result from~\cite{MonodMRL}:

\begin{thmintro}\label{thm:eub}
Let $n$ be even. The space of antisymmetric classes in $\hb^n(\GL_n^+(\RR), \RR)$ is one-dimensional. In particular, there exists a unique antisymmetric class $\eub$ in $\hb^n(\GL_n^+(\RR), \RR)$ whose image in $\hh^n(\GL_n^+(\RR), \RR)$ is the Euler class~$\eu$. Moreover, $\|\eub\|=\|\eu\|$.
\end{thmintro}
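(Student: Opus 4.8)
The plan is to reduce the statement to a computation for $\SL_n(\RR)$, to feed in the vanishing of bounded cohomology below degree $n$ supplied by \cite{MonodMRL}, and to read off the one-dimensionality from the fact that the Euler class is the unique ``antisymmetric feature'' of the cohomology of $\SL_n(\RR)$ in degree $n$. Throughout, $\hb^\bu$ denotes continuous bounded cohomology and $\hh^\bu$ continuous cohomology, and $\tau$ is the order-two symmetry (conjugation by an orientation-reversing matrix), which acts by isometries for the relevant semi-norms.

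\textbf{Reduction and lower bound.} The determinant yields $\GL_n^+(\RR)\cong\SL_n(\RR)\times\RR_{>0}$, with $\tau$ acting only on the first factor, through the outer automorphism $\sigma=\mathrm{Ad}\,\mathrm{diag}(-1,1,\dots,1)$. As $\RR_{>0}$ is amenable, inflation gives a $\langle\tau\rangle$-equivariant isometric isomorphism $\hb^\bu(\GL_n^+(\RR),\RR)\cong\hb^\bu(\SL_n(\RR),\RR)$ (and the same in ordinary cohomology). Hence it suffices to show that the $(-1)$-eigenspace of $\sigma^*$ on $\hb^n(\SL_n(\RR),\RR)$ is exactly one-dimensional and is spanned by a bounded pre-image of $\eu$. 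For the lower bound: $\eu\in\hh^n(\SL_n(\RR),\RR)$ is nonzero and admits a bounded representative (the Sullivan--Smillie/Ivanov--Turaev cocycle), so it has a bounded pre-image $\beta\bdd$; its antisymmetrisation $\tfrac12(\beta\bdd-\tau\beta\bdd)$ is still a pre-image of $\eu$ (as $\eu$ is antisymmetric), is antisymmetric, and is nonzero, so the eigenspace has dimension at least one.

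\textbf{Upper bound.} By the van Est isomorphism, $\hh^\bu(\SL_n(\RR),\RR)$ is the cohomology of the compact dual symmetric space $\mathrm{SU}(n)/\mathrm{SO}(n)$; this is, as a graded vector space, the exterior algebra on Borel-type generators in degrees $5,9,13,\dots$ together with the Euler class in the even degree $n$. The decisive point is the $\sigma$-action: the Borel-type generators are restrictions of classes of $\GL_n(\RR)$ and so are fixed by $\sigma^*$, while the Euler class needs an orientation and is negated; hence the $(-1)$-eigenspace of $\sigma^*$ on $\hh^k(\SL_n(\RR),\RR)$ is spanned by the products of generators involving the Euler class, which is empty for $k=n-1$ and a single line (through $\eu$) for $k=n$. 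Now I would invoke \cite{MonodMRL}, whose vanishing of bounded cohomology below the critical degree applies here to the antisymmetric part, to conclude that the $(-1)$-eigenspace of $\sigma^*$ on $\hb^n(\SL_n(\RR),\RR)$ injects, via the comparison map, into the $(-1)$-eigenspace on $\hh^n(\SL_n(\RR),\RR)$: there is no room for an ``exact'' antisymmetric bounded class, since its only possible source, the antisymmetric part of $\hh^{n-1}(\SL_n(\RR),\RR)$, vanishes. Combined with the lower bound, this eigenspace is precisely the line through the bounded Euler class; thus it is one-dimensional and contains the sought $\eub$, which is therefore unique (this line maps injectively to $\hh^n$ because $\eu\ne0$). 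I expect this to be the main obstacle: degree $n$ lies just beyond the range in which bounded cohomology is automatically controlled, so excluding exotic antisymmetric bounded classes genuinely requires the input of \cite{MonodMRL} together with a careful analysis of the comparison map and of $\hh^{n-1}$, $\hh^n$ of $\mathrm{SU}(n)/\mathrm{SO}(n)$.

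\textbf{Equality of norms.} By point~(i), $\|\eu\|$ is the infimum of $\|\beta\bdd\|$ over all bounded pre-images $\beta\bdd$ of $\eu$. Given such a $\beta\bdd$, the antisymmetrisation $\tfrac12(\beta\bdd-\tau\beta\bdd)$ is again a bounded pre-image of $\eu$, is antisymmetric, and has semi-norm at most $\|\beta\bdd\|$ because $\tau$ is isometric; by the uniqueness just proved it equals $\eub$. Hence $\|\eub\|\le\|\beta\bdd\|$ for every pre-image, so $\|\eub\|\le\|\eu\|$; conversely $\|\eu\|\le\|\eub\|$ since $\eub$ is itself a pre-image. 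Therefore $\|\eub\|=\|\eu\|$.
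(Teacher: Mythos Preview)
Your reduction to $\SL_n(\RR)$ and your argument for $\|\eub\|=\|\eu\|$ via antisymmetrisation are fine, and the latter is essentially what the paper does as well. The lower bound is also correct.

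The gap is in your upper bound. You claim that the vanishing of $\hb^q$ (on the antisymmetric part) for $q<n$, together with the vanishing of the antisymmetric part of $\hh^{n-1}(\SL_n(\RR),\RR)$, forces the comparison map $c^n:\hb^n\to\hh^n$ to be injective on the $(-1)$-eigenspace. This inference is not valid. An ``exact'' bounded class in degree $n$ is represented by a bounded cocycle of the form $db$ with $b$ an \emph{unbounded} $(n-1)$-cochain; its ``source'' is not $\hh^{n-1}$ but rather the cohomology of the quotient complex $C^\bu/C_b^\bu$ in degree $n-1$, which you have no handle on. Neither the vanishing of $\hb^{n-1}$ nor that of $\hh^{n-1}$ bounds the kernel of $c^n$. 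So as it stands you have not excluded the possibility of nontrivial antisymmetric bounded classes mapping to zero in $\hh^n$, and the one-dimensionality is unproved.

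The paper avoids the comparison map entirely. It passes to $\hb^\bu(\GL_n(\RR),\RRo)$ (which is isometrically the antisymmetric part of $\hb^\bu(\GL_n^+(\RR),\RR)$), and then uses the full strength of \cite{MonodMRL}: not merely the vanishing below degree $n$, but the fact that $\hb^\bu(\GL_n(\RR),\RRo)$ is computed by the concrete complex $L^\infty(\pr(\RR^n)^{\bu+1},\RRo)^{\GL_n(\RR)}$, once three conditions ($\mathrm{M}_\mathrm{I}$, $\mathrm{M}_\mathrm{II}$, $\mathrm{A}$) are verified. In that complex, Proposition~\ref{prop:unique:pcoc} shows there is no nonzero equivariant map in degrees $\leq n$ and exactly a one-dimensional space of them in degree $n+1$; hence $\hb^n(\GL_n(\RR),\RRo)$ is one-dimensional on the nose, with no appeal to $\hh^\bu$ or to injectivity of $c^n$. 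If you want to rescue your approach, you would need a genuine argument for the injectivity of $c^n$ on the antisymmetric part (for instance by exhibiting a left inverse, or by a spectral-sequence argument specific to this coefficient module); the ingredients you list do not supply one.
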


\noindent
(This solves Problem~D in~\cite{MonodICM}.)

\begin{defn*}
We call $\eub$ the \textbf{bounded Euler class} of $\GL_n^+(\RR)$. Since the inclusion $\SL_n(\RR)\to \GL_n^+(\RR)$ and quotient $\GL_n^+(\RR)\to \PSL_n(\RR)$ both induce isometric isomorphisms in bounded cohomology we use the same notation~$\eub$ and refer to the bounded Euler class of $\SL_n(\RR)$ and $\PSL_n(\RR)$.
\end{defn*}

Despite its uniqueness with respect to $\GL_n^+(\RR)$, the existence of a canonical bounded class should allow for a finer analysis than the usual class~$\eu$. Indeed, the pull-back of $\eu$ to another group, for instance through a holonomy representation, can admit many more bounded representatives. This type of phenomenon is illustrated in~\cite{Ghys84,Burger-IozziPSUpq}.

\bigskip

We now turn to point~(ii), which is the most substantial part of this article: to compute~$\|\eub\|$. General considerations show that $\eub$ is given by a unique $L^\infty$-cocycle on the projective space. However, although the norm of this unique cocycle is patently~$2^{-n}$, this will not \emph{a priori} give any lower bound on the semi-norm of~$\eub$. Indeed, the isomorphisms given by homotopic resolutions have no reason to be isometric. In fact, to our knowledge, the only general method that guarantees isometries is the use of averaging techniques over amenable groups or actions.

Therefore, we pull back the cocycle to the Grassmannian of complete flags, which is an amenable space and hence computes the right semi-norm. Of course, this comes at the cost of losing the uniqueness of the cocycle since this space is much larger than the projective space and thus supports many coboundaries. We shall nevertheless exhibit a special locus of complete flags where every coboundary must vanish (Section~\ref{sec:coboundaries}). Yet this locus is small; it is a null-set. At this point, we encounter an interesting surprise: The unique $L^\infty$-cocycle that we pulled back \emph{cannot} be represented by an actual cocycle on the projective space when $n\geq 4$; there is an obstruction on another null-set (Proposition~\ref{prop:dPE_neq0}).

\medskip

Nonetheless, on the space of flags, or better of oriented flags, we can remove the obstruction on the blown-up singular locus by a careful iterative deformation. We thus construct an explicit cocycle on oriented flags which, generically, depends only on the projective point (flagstaff) and thus still represents the a.e.\ defined cocycle (Section~\ref{sec:coc}). As desired, this new cocycle is particularly neat even on singular loci:

\begin{thmintro}\label{thm:two-values}
The Euler class $\eu$ of $\GL_n^+(\RR)$ can be represented by an invariant Borel cocycle on $(\GL_n^+(\RR))^{n+1}$ taking only the two values~$\pm2^{-n}$.

(This cocycle is an explicit, algebraically defined invariant on the space of complete oriented flags in~$\RR^n$.)
\end{thmintro}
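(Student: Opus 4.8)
The plan is to produce the cocycle directly on the space of complete oriented flags, not on $\GL_n^+(\RR)$ itself. Recall that a complete flag in $\RR^n$ is a chain $0 = V_0 \subset V_1 \subset \dots \subset V_n = \RR^n$ with $\dim V_i = i$; an orientation of the flag is a choice of orientation on each line $V_i / V_{i-1}$, equivalently a basis $(e_1, \dots, e_n)$ up to positive rescaling of each $e_i$. The group $\GL_n^+(\RR)$ acts transitively on the space $\odr$ of oriented flags, and since the stabilizer of an oriented flag is amenable (it is contained in the group of upper-triangular matrices with positive diagonal entries), $\odr$ is an amenable $\GL_n^+(\RR)$-space; hence $L^\infty$-cochains on $(\odr)^{\bu+1}$ compute $\hb^\bu(\GL_n^+(\RR),\RR)$ isometrically, and Borel cochains on $(\odr)^{\bu+1}$ compute $\hh^\bu(\GL_n^+(\RR),\RR)$. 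So it suffices to exhibit an invariant Borel $n$-cocycle on $(\odr)^{n+1}$, taking values in $\{\pm 2^{-n}\}$, whose class is $\eu$.

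The construction proceeds by building an explicit invariant $\{\pm 1\}$-valued (then rescaled by $2^{-n}$) function $\Phi$ of an $(n+1)$-tuple of oriented flags, and the first step is to pin down what $\Phi$ should be. The natural candidate comes from the Sullivan--Smillie simplicial cocycle: given $n+1$ oriented flags $F_0, \dots, F_n$, one extracts for each $i$ the line $\ell_i = (F_i)_1$ (the ``flagstaff'', i.e.\ the first subspace), oriented by the flag; generically these $n+1$ oriented lines through the origin are in general position, and one reads off a sign --- essentially the sign of the determinant of the $n\times n$ matrix whose columns are appropriate oriented vectors spanning $n$ of the $\ell_i$, combined across the $n+1$ omissions, which detects whether the $n+1$ rays ``positively surround'' the origin. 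This is exactly the combinatorial data behind the Ivanov--Turaev representative. The point of passing to \emph{oriented flags} rather than just oriented lines is that the full flag structure provides enough room to resolve the ambiguities that occur on the singular locus where the $\ell_i$ fail to be in general position: one defines $\Phi$ on the generic locus by the formula above and then extends it by a careful iterative procedure that works stratum by stratum down the singular locus, using the higher pieces $(F_i)_2, (F_i)_3, \dots$ of the flags to break ties in a way that keeps the result invariant and Borel while never introducing a third value.

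There are then two things to verify. First, that $\Phi$ (suitably normalized to $2^{-n}\Phi$) is actually a cocycle, i.e.\ $\mathrm{d}\Phi = 0$ on $(\odr)^{n+2}$; on the generic locus this is the classical fact that the Sullivan--Smillie sign is a cocycle (a linking/orientation count), and on the singular loci it must be checked that the iterative extension respects $\mathrm{d}=0$ --- this is where the careful choices in the construction pay off, and I expect it to require a stratum-by-stratum induction paralleling the one used to define $\Phi$. Second, that the class of $2^{-n}\Phi$ is $\eu$ and not something else: since $\Phi$ changes sign under reversing the ambient orientation (each $\ell_i$ flips, and there are $n+1$ of them, but one also re-examines the sign convention --- the net effect is a single sign change because $\Phi$ is odd in the global orientation, as the Euler class must be), the class $[2^{-n}\Phi]$ is antisymmetric, hence by Theorem~\ref{thm:eub} it is a scalar multiple of $\eub$; that the scalar is $1$ follows by evaluating against a known characteristic number (e.g.\ pairing with the fundamental class of a surface bundle, or a closed hyperbolic manifold, realizing the Euler number), or by comparison with the Ivanov--Turaev cocycle on the generic locus where the two agree.

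The main obstacle is the construction of $\Phi$ on the singular locus together with the verification of the cocycle identity there: on the flagstaff-degenerate strata one cannot simply use the generic formula (which is why, by Proposition~\ref{prop:dPE_neq0}, no cocycle exists on the projective space for $n \geq 4$), and the blown-up resolution on oriented flags must be engineered so that (a) it is $\GL_n^+(\RR)$-invariant and Borel, (b) it agrees with the generic formula almost everywhere so the resulting class is the a.e.-defined one, (c) it takes only the values $\pm 1$, and (d) the coboundary vanishes identically, not just almost everywhere. Balancing (c) against (d) is the crux --- the obvious deformations that kill the coboundary tend to produce intermediate values --- and the resolution is the ``careful iterative deformation'' alluded to in the introduction, carried out in Section~\ref{sec:coc}.
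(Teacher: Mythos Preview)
Your outline follows the paper's approach: build the cocycle on complete oriented flags so that on generic tuples it reduces to the Sullivan--Smillie sign on flagstaffs, verify $d=0$ everywhere, and identify the class with $\eu$ via antisymmetry and comparison with the known simplicial representative. Two points where the paper's implementation differs from what you anticipate are worth flagging.

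First, the definition of the cocycle is not a stratum-by-stratum extension but a single closed formula. For oriented flags $F_1,\ldots,F_k$ one defines an oriented $k$-plane $[F_1,\ldots,F_k]$ inductively: $[F_1]=F_1^1$, and $[W,F]$ is $W$ together with the smallest piece $F^d$ not contained in $W$, oriented by $(F^d)^+$. Then $\coco(F_0,\ldots,F_n)=\prod_i \ori([F_0,\ldots,\widehat{F_i},\ldots,F_n])$ is already $\pm 1$ everywhere and manifestly $\GL_n(\RR)$-equivariant. Your ``break ties using higher pieces of the flag'' is exactly right in spirit, but having a uniform algebraic formula rather than a patched definition is what makes the next step work.

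Second, and this is the main methodological difference, the cocycle identity is \emph{not} checked stratum by stratum as you expect. Instead the paper proves a reduction lemma (Proposition~\ref{prop:Fi->xi}): given any $(n+2)$-tuple of oriented flags $F_0,\ldots,F_{n+1}$, there exist vectors $x_0,\ldots,x_{n+1}\in\RR^n$ with
\[
\ori([F_0,\ldots,\widehat{F_i},\ldots,\widehat{F_j},\ldots,F_{n+1}])=\ori(x_0,\ldots,\widehat{x_i},\ldots,\widehat{x_j},\ldots,x_{n+1})
\]
for all $i<j$. These $x_k$ are produced by a downward induction on $k$, replacing $F_k$ by a point $x_k$ chosen (via Lemma~\ref{lemma:Or(V,x)=Or(V,F)}) so that none of the finitely many relevant orientations change. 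The resulting vector tuple is automatically hereditarily spanning, and $\coco$ on the flags equals $\pcoc$ on the vectors term by term; hence $d\coco=0$ follows immediately from $d\pcoc=0$ on hereditarily spanning tuples (Proposition~\ref{prop:dPE=0}). So the ``iterative'' work lives in this reduction, not in a case analysis on singular strata, and there is no tension between keeping values in $\{\pm1\}$ and killing the coboundary.

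The identification with $\eu$ is as you say: the paper compares $\coco$ (via $\pcoc$) with Smillie's cocycle on the generic locus (Corollary~\ref{cor:Smillie}, Proposition~\ref{prop:AmapstoSmillie}) and then uses one-dimensionality to fix the proportionality constant.
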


The existence of some measurable cocycle taking only a finite number of values and representing $\eu$ was expected from~\cite{BucherKarlsson,Bucher07}. Indeed, the corresponding statement was established for the discrete group $\GL_n^+(\RR)^\delta$ and more generally for any primary characteristic class of flat $G$-bundles, whenever $G$ is an algebraic subgroup of $\GL_n(\RR)$. The corresponding statement for the standard topology follows from the proof given there.

\medskip

Finally, we note that our new cocycle is a singular extension of the simplicial cocycles constructed by Sullivan and Smillie~\cite{Sullivan76, Smillie_unpublished}.  More precisely, for any flat bundle over a simplicial complex $K$, the classifying map $|K|\rightarrow B \GL_n^+(\RR)^\delta$ can be chosen so that the pull-back of our cocycle is precisely Smillie's simplicial cocycle when restricted to the simplices of $K$. It presents the advantage of being immediate to evaluate, in contrast to the Ivanov--Turaev cocycle~\cite{Ivanov-Turaev} which is obtained by taking averages of Sullivan--Smillie cocycles. Moreover, as it is defined on all singular simplices simultaneously, and not only the simplices of a given triangulation (or of one particular representative of the fundamental cycle) like the simplicial cocycles of Sullivan--Smillie, it might be more useful for actually computing Euler numbers of flat bundles over manifolds whose triangulations are often very complicated, if known at all.

\section{General notation}

Throughout the paper, $n$ is an even integer.

\medskip
We agree that a basis of a finite-dimensional vector space is an \emph{ordered} tuple $(v_1, \ldots, v_k)$. It thus endows the space with an orientation. If the vectors $v_1, \ldots, v_k$ are merely linearly independent, we denote by $\langle v_1, \ldots, v_k\rangle$ the oriented space that they span. When confusion is unlikely, we use the same notation for an oriented space and its underlying vector space. There is a natural direct sum $V\oplus W$ of oriented spaces $V, W$; the orientation can depend on the order of summands. By default, $\RR^k$ is endowed with its canonical basis $(e_1, \ldots, e_k)$ and with the corresponding orientation. We write $e_0=e_1+\ldots+e_k$.

If $V$ denotes the vector space $\RR^k$ endowed with some orientation, let $\ori(V)\in\{-1,1\}$ be the sign of this orientation relatively to the canonical orientation. Further, if $(v_{1},\ldots,v_{k})$ is a basis of $\RR^{k}$, we write $\ori(v_{1},\ldots,v_{k})$ for $\ori(\langle v_{1},\ldots,v_{k} \rangle)$ and extend $\ori$ to a function on all $k$-tuples of elements in $\RR^{k}$ by setting $\ori(v_{1},\ldots,v_{k})=0$ if $(v_{1},\ldots,v_{k})$ is not a basis.

We write $\sig(x)\in\{-1,1\}$ for the sign of $x\in \RR^*$ and extend it to a homomorphism on $\GL_n(\RR)$ as the sign of the determinant; $\GL_n^+(\RR)$ is its kernel. Notice that $\sig$ descends to $\PGL_n(\RR)$ since $n$ is even. We denote by $\RRo$ the $\GL_n(\RR)$-module (or $\PGL_n(\RR)$-module) $\RR$ endowed with multiplication by $\sig$.

Given any $(k+1)$-tuple $(x_0, \ldots, x_k)$, the $k$-tuple obtained by dropping $x_i$ is written $(x_0, \ldots, \widehat{x_i}, \ldots, x_k)$. Cocycles and coboundaries in various function spaces will be with respect to the differential $d=\sum_{i=0}^k (-1)^i d_i$, where $d_i$ is the evaluation on $(x_0, \ldots, \widehat{x_i}, \ldots, x_k)$.

The projective space is denoted by $\pr(\RR^n)$; we often use the same notation for both elements in $\RR^n$ and their image in $\pr(\RR^n)$.

\medskip

We refer to~\cite{Burger-Monod3,Monod} for background on the bounded cohomology of locally compact groups and to~\cite{BucherKarlsson,Bucher07,Gromov} for the relation to characteristic classes.

\section{The almost-cocycle on the projective space}\label{sec:pcoc}

The bounded cohomology of $\GL_n(\RR)$ with coefficients in $\RRo$ can be represented by $L^\infty$-cocycles on the projective space for reasons that we shall explain in Section~\ref{sec:eub}. Therefore, we begin with a few elementary observations on equivariant functions on the projective space.

\begin{prop}\label{prop:unique:pcoc}
There is, up to scaling, a unique non-zero $\GL_n(\RR)$-equivariant map
$$(\pr(\RR^{n}))^{q}\longrightarrow\RRo$$
for $q=n+1$; there is none for $q\leq n$.
\end{prop}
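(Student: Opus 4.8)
The plan is to run a direct orbit analysis of the $\GL_n(\RR)$-action on $(\pr(\RR^n))^q$, exploiting that $\RRo$ is one-dimensional so that an equivariant map $f$ must satisfy $f(g\ell_1,\dots,g\ell_q)=\sig(g)\,f(\ell_1,\dots,\ell_q)$; in particular $f$ must vanish on any tuple whose pointwise stabilizer in $\GL_n(\RR)$ contains an element of negative determinant. The first step records the basic instance of this: if the lines $\ell_1,\dots,\ell_q$ span a proper subspace $V\subsetneq\RR^n$, pick a complement $W$ and take $g$ acting as the identity on $V$ and as any determinant-$(-1)$ map on $W$ (possible since $\dim W\geq 1$); it fixes every $\ell_i$, so $f(\ell_1,\dots,\ell_q)=-f(\ell_1,\dots,\ell_q)=0$.

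Next I would dispatch the range $q\leq n$. For $q<n$ every $q$-tuple of lines spans a proper subspace, so $f\equiv 0$ by the first step. For $q=n$, a tuple either spans a proper subspace (handled) or is of the form $([v_1],\dots,[v_n])$ for a basis $(v_1,\dots,v_n)$; in the latter case the linear map sending $v_1\mapsto -v_1$ and $v_i\mapsto v_i$ for $i\geq 2$ fixes every line and has determinant $-1$, so again $f\equiv 0$. Hence there is no non-zero equivariant map for $q\leq n$. (That this mechanism fails on the generic locus for $q=n+1$ is consistent with the fact that the scalar $-I$ has positive determinant precisely because $n$ is even; for odd $n$ it would force $f$ to vanish there too.)

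For $q=n+1$ I would call a tuple $(\ell_0,\dots,\ell_n)$ \emph{generic} if every $n$-subset spans $\RR^n$. Off the generic locus some $n$-subset spans a subspace $W$ with $\dim W\leq n-1$: if the omitted line lies in $W$ the whole tuple spans a proper subspace and the first step applies, and otherwise $g=\mathrm{id}_W\oplus(-\mathrm{id}_U)$, for a complement $U$ of $W$ chosen through the omitted line, fixes every $\ell_i$ and has determinant $-1$; so $f$ vanishes off the generic locus. On the generic locus $\GL_n(\RR)$ acts transitively: given lifts $v_i$ with all $n$-subsets bases, the element carrying $(v_1,\dots,v_n)$ to $(e_1,\dots,e_n)$ moves $\ell_0$ to some $[\sum a_ie_i]$ with all $a_i\neq 0$, and then $\mathrm{diag}(a_1^{-1},\dots,a_n^{-1})$ carries that to $[e_0]$ while fixing $[e_1],\dots,[e_n]$. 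Hence any equivariant $f$ is determined by its single value at $([e_0],[e_1],\dots,[e_n])$, so the space of equivariant maps is at most one-dimensional. Finally I would exhibit a non-zero element: set
$$f(\ell_0,\dots,\ell_n)=\prod_{j=0}^{n}\ori(v_0,\dots,\widehat{v_j},\dots,v_n)$$
for arbitrary lifts $v_i$ of $\ell_i$. Rescaling the $v_i$ by scalars $t_i$ multiplies this product by $\mathrm{sign}\bigl(\prod_i t_i^{\,n}\bigr)=1$ since $n$ is even, so $f$ is well defined on $(\pr(\RR^n))^{n+1}$; applying $g\in\GL_n(\RR)$ multiplies each factor by $\sig(g)$, hence the product by $\sig(g)^{n+1}=\sig(g)$ since $n+1$ is odd, so $f$ is equivariant; and $f([e_0],[e_1],\dots,[e_n])=\pm1\neq 0$. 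Therefore the space is exactly one-dimensional, which is the assertion.

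I expect the only delicate point to be bookkeeping: checking that every degenerate configuration of lines is genuinely covered by the vanishing argument (the split according to whether the omitted line lies in $W$), verifying transitivity on the generic locus, and keeping the two parity inputs straight — $n$ even for the well-definedness of $f$ and the absence of a negative-determinant scalar, $n+1$ odd for the equivariance of $f$.
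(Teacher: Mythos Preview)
Your argument is correct and follows the same route as the paper: vanishing on non-hereditarily-spanning tuples via a determinant-$(-1)$ element of the pointwise stabiliser, transitivity on the generic locus for uniqueness, and the explicit product-of-orientations map for existence (the paper records this same map as $\pcoc$ just after the proposition). One small bookkeeping slip: in your ``otherwise'' branch the map $\mathrm{id}_W\oplus(-\mathrm{id}_U)$ has determinant $(-1)^{\dim U}$, which need not be $-1$; but when $\dim W\leq n-2$ the whole $(n+1)$-tuple already lies in the proper subspace $W+\ell$, so your first step applies there and the ``otherwise'' branch is only needed when $\dim W=n-1$, where indeed $\dim U=1$.
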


With the right scaling, the unique map above will be seen to yield an $L^\infty$-cocycle representing the Euler class. Interestingly, this a.e.\ function class cannot be given by an actual cocycle:

\begin{prop}\label{prop:dPE_neq0}
The coboundary of a non-zero $\GL_n(\RR)$-equivariant map
$$(\pr(\RR^{n}))^{n+1}\lra\RRo$$
does not vanish everywhere on~$(\pr(\RR^{n}))^{n+2}$ unless $n=2$.
\end{prop}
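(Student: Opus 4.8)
The plan is to use the explicit description of the unique equivariant map provided by Proposition~\ref{prop:unique:pcoc}. By that result, the $\GL_n(\RR)$-equivariant map $E\colon (\pr(\RR^n))^{n+1}\to\RRo$ is unique up to scaling, and it is natural to normalize it so that on a generic configuration $(v_0,\ldots,v_n)$ — one in which every $n$-element subtuple is a basis — its value is governed by the orientations $\ori(v_0,\ldots,\widehat{v_i},\ldots,v_n)$. Concretely one expects $E(v_0,\ldots,v_n)$ to equal a constant times $\sum_{i=0}^n (-1)^i \ori(v_0,\ldots,\widehat{v_i},\ldots,v_n)$, or more precisely the ``common value up to sign'' of the entries $(-1)^i\ori(v_0,\ldots,\widehat{v_i},\ldots,v_n)$ when these all agree; this is exactly the mechanism by which $E$ is forced to exist only in degree $q=n+1$. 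First I would pin down this formula (it is essentially the content of the proof of the previous proposition) so that $E$ is available as an honest function, not merely an a.e.\ class.

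Next I would compute the coboundary $dE$ on a carefully chosen point of $(\pr(\RR^n))^{n+2}$ and show it is nonzero. The coboundary is $dE(v_0,\ldots,v_{n+1}) = \sum_{j=0}^{n+1}(-1)^j E(v_0,\ldots,\widehat{v_j},\ldots,v_{n+1})$. The idea is to pick the $n+2$ points so that the generic formula applies to \emph{most} of the $n+2$ faces but not all of them — i.e.\ to place the configuration on the ``small'' degenerate locus where the blow-up phenomenon described in the introduction occurs. A clean choice: take $v_1,\ldots,v_n$ to be a basis, $v_0$ a generic vector, and $v_{n+1}$ lying in a coordinate hyperplane so that exactly one or two of the deleted subtuples fail to be bases. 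Then $E$ on those faces is either $0$ or is determined by a nontrivial continuity/limit argument, and the alternating sum of the remaining, generically-evaluated faces does not cancel. Carrying this out requires only finite linear-algebra bookkeeping with signs of determinants, but it is where all the actual work lies.

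The main obstacle is precisely the fact that this does \emph{not} happen when $n=2$: there the same computation must give $dE\equiv 0$, so the choice of degenerate configuration and the sign bookkeeping must be sensitive enough to distinguish $n=2$ from $n\geq 4$. The structural reason is that for $n=2$ the map $E$ on $(\pr(\RR^2))^3$ is (a multiple of) the classical orientation/Euler cocycle, which is a genuine bounded cocycle on the circle; its coboundary vanishes identically by the three-term sign identity for oriented triples of points on $\pr(\RR^1)\cong S^1$. For $n\geq 4$ the analogous ``local'' cancellations no longer close up, because the generic value of $E$ involves a single shared sign $\ori(\ldots)$ across $n+1$ faces, and perturbing one point across a hyperplane flips a strict subset of the relevant orientations in a way that cannot be absorbed. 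So the real content is: exhibit one explicit bad configuration for each even $n\geq 4$, evaluate $dE$ there by reducing to signs of a few determinants, and check the total is $\pm c \neq 0$ while verifying the parallel computation collapses for $n=2$. I expect the cleanest route is to take $v_{n+1}$ on the hyperplane spanned by $v_1,\ldots,v_{n-1}$ (so that the face omitting $v_n$ and the face omitting $v_0$ are the only ones whose evaluation needs care), reduce everything to an $(n-1)$-dimensional sub-configuration, and read off the non-vanishing from an explicit small example such as $v_i=e_i$ for $1\le i\le n$, $v_0=e_0$, and $v_{n+1}=e_1-e_2+e_3-\cdots$.
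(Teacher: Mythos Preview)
Your overall plan---evaluate $df$ at a carefully chosen $(n+2)$-tuple---is exactly right, and matches the paper. But two things go wrong in the execution.

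First, the description of the equivariant map is off: the unique map is (up to scaling) the \emph{product} $\pcoc(v_0,\ldots,v_n)=\prod_{i=0}^n \ori(v_0,\ldots,\widehat{v_i},\ldots,v_n)$, not a sum and not a ``common value'' of the $(-1)^i\ori(\ldots)$. More importantly, the crucial structural fact you should be exploiting is the one proved inside Proposition~\ref{prop:unique:pcoc}: the map \emph{vanishes on every non-hereditarily-spanning $(n+1)$-tuple}. This suggests the opposite strategy to yours. Rather than making most faces generic and a few degenerate (forcing you into sign bookkeeping that must miraculously fail to cancel), the paper makes \emph{all but one} face non-hereditarily-spanning, so that $df$ collapses to a single surviving nonzero term with no computation at all. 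Concretely, the paper evaluates at $(e_0,e_1,\ldots,e_n,e_1+e_2)$: for $n>2$ every face except $(e_0,e_1,\ldots,e_n)$ fails to be hereditarily spanning, so $df=\pm f(e_0,\ldots,e_n)\neq 0$. For $n=2$ the face $(e_1,e_2,e_1+e_2)$ is hereditarily spanning too, and the argument correctly breaks down.

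Second, your explicit test configuration $v_i=e_i$, $v_0=e_0$, $v_{n+1}=e_1-e_2+e_3-\cdots$ does not work in general. For $n=4$ one checks that the faces omitting $e_1,\ldots,e_4$ are non-hereditarily-spanning, leaving only two terms:
\[
d\pcoc = \pcoc(e_1,e_2,e_3,e_4,v_5) - \pcoc(e_0,e_1,e_2,e_3,e_4).
\]
Using $\ori(e_1,\ldots,\widehat{e_j},\ldots,e_n,x)=(-1)^j\sig(x_j)$ one finds both terms equal to $+1$, so $d\pcoc=0$ at this point. (More generally your example gives $d\pcoc=1-(-1)^{n/2}$, which vanishes whenever $n\equiv 0\pmod 4$.) So the proposed configuration is not a witness, and the approach of leaving several generic faces is genuinely fragile. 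The fix is the paper's: engineer the degeneracies so that only \emph{one} face survives.
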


The proof of the above propositions is an occasion to introduce a concept that will be used throughout:

\begin{defn}
Let $k\geq n$. A $k$-tuple in $\RR^n$ or in $\pr(\RR^n)$ is \textbf{hereditarily spanning} if every subcollection of $n$ elements spans $\RR^n$.
\end{defn}

\begin{example}\label{ex:spanning}
The $(n+1)$-tuple $(x, e_1, \ldots, e_n)$ is hereditarily spanning if and only if all coordinates of $x$ are non-zero. The $(n+2)$-tuple $(e_0, e_1, \ldots, e_n,x)$ is hereditarily spanning if and only if all coordinates of $x$ are non-zero and distinct.
\end{example}

\begin{proof}[Proof of Proposition~\ref{prop:unique:pcoc}]
The action of $\PGL_n(\RR)$ on hereditarily spanning $(n+1)$-tuples in $\pr(\RR^n)$ is free and transitive (as is apparent by e.g. considering Example~\ref{ex:spanning}). This implies existence, choosing the value zero on all other $(n+1)$-tuples. Next, we claim that in fact any $\GL_n(\RR)$-equivariant map $f$ must vanish on tuples $(x_0, \ldots, x_n)$ that are not hereditarily spanning; this entails uniqueness.

To prove the claim, we can assume by symmetry that $x_1, \ldots, x_n$ are contained in a subspace $V\se \RR^n$ of dimension $n-1$. By $\GL_n(\RR)$-equivariance, we can further assume that $x_0$ is either perpendicular to $V$ or contained in it. Let now $g$ be the orthogonal reflection along $V$; then $\epsilon(g)=-1$ and $g$ fixes the projective points $x_0,x_1, \ldots, x_n$. Therefore $f$ vanishes at that tuple, as claimed. The argument given for this claim also settles the case~$q\leq n$.
\end{proof}

\begin{rem}
Had we allowed $n$ to be odd, there would be no non-zero $\GL_n(\RR)$-equivariant map $(\pr(\RR^{n}))^{q}\to\RRo$ for any $q$ whatsoever since then the centre of $\GL_n(\RR)$, which acts trivially on $\pr(\RR^{n})$, contains elements with negative determinant (and this is the underlying reason for the vanishing of the Euler class). Consider $\GL_n^+(\RR)$-invariant maps instead; one then finds that $\GL_n^+(\RR)$ has only one orbit of hereditarily spanning $(n+1)$-tuples whereas it has two when $n$ is even.
\end{rem}

\begin{proof}[Proof of Proposition~\ref{prop:dPE_neq0}]
Let $f$ be a map as in the statement; for simpler notation, we consider $f$ as defined on $(\RR^n)^{n+1}$. Let us evaluate $df$ at the $(n+2)$-tuple $(e_0, e_1, \ldots, e_n, e_1+e_2)$. We examine all sub-$(n+1)$-tuples occurring in the evaluation of $df$:

First, $f$ vanishes on $(e_1, \ldots, e_n, e_1+e_2)$ since it is not hereditarily spanning as soon as $n> 2$ (Example~\ref{ex:spanning}). Next, one checks that $(e_0, \ldots, \widehat{e_i},\ldots, e_n, e_1+e_2)$ is not hereditarily spanning whenever $1\leq i\leq n$ (distinguishing cases as $1\leq i\leq 2$ or $i>2$), hence $f$ vanishes there aswell. However, $f$ is non-zero on $(e_0, e_1, \ldots e_n)$ since it belongs to the hereditarily spanning orbit; this establishes the claim.
\end{proof}

The existence and uniqueness proof indicates of course exactly what the equivariant map is; nevertheless, we wish to record an explicit formula. Define first the function
\[\pcoc:(\RR^{n})^{n+1}\longrightarrow\{-1,0,1\}, \kern5mm \pcoc(v_{0},\ldots,v_{n})=\prod_{i=0}^{n}\ori(v_{0},\ldots,\widehat{v_{i}},\ldots,v_{n}).\]
Since $n$ is even and $\ori(v_{0},\ldots, \widehat{v_{i}},\ldots, \lambda v_j,\ldots, v_{n}) = \sig(\lambda) \ori(v_{0},\ldots, \widehat{v_{i}},\ldots,v_{n})$ for all $\lambda\in\RR^*$ and $j\neq i$, we deduce:

\begin{lem}\label{lem:pcoc}
$\pcoc$ descends to an alternating equivariant map
\[\pcoc:(\pr(\RR^{n}))^{n+1}\longrightarrow\{-1,0,1\}\]
(denoted by the same symbol).\qed
\end{lem}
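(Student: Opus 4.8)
The plan is to verify the three assertions about $\pcoc$ — that it descends to $(\pr(\RR^{n}))^{n+1}$, that it is $\GL_n(\RR)$-equivariant with values in $\RRo$, and that it is alternating — by elementary sign bookkeeping in the product $\pcoc(v_0,\ldots,v_n)=\prod_{i=0}^n\ori(v_0,\ldots,\widehat{v_i},\ldots,v_n)$. The only inputs I would use are the two standard properties of $\ori$ on $n$-tuples in $\RR^n$: it gets multiplied by $\sig(g)$ when $g\in\GL_n(\RR)$ is applied to all entries, and by $-1$ under a transposition of two entries (and it vanishes when the entries fail to form a basis). That the product takes values in $\{-1,0,1\}$ is clear since each factor does.

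For the descent, I would note that a fixed vector $v_j$ occurs in exactly $n$ of the $n+1$ factors — all but the one indexed by $i=j$ — so replacing $v_j$ by $\lambda v_j$ with $\lambda\in\RR^*$ multiplies $\pcoc$ by $\sig(\lambda)^n=1$ because $n$ is even; this is precisely the computation already recorded just before the statement. For equivariance, applying $g\in\GL_n(\RR)$ to all the $v_i$ multiplies each of the $n+1$ factors by $\sig(g)$, hence $\pcoc(gv_0,\ldots,gv_n)=\sig(g)^{n+1}\,\pcoc(v_0,\ldots,v_n)=\sig(g)\,\pcoc(v_0,\ldots,v_n)$ since $n+1$ is odd, which is exactly equivariance into $\RRo$.

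For the alternating property it suffices to treat a transposition of two consecutive entries $v_a$ and $v_{a+1}$. I would partition the $n+1$ factors into the $n-1$ factors with index $i\notin\{a,a+1\}$ and the two factors with $i\in\{a,a+1\}$. In each factor of the first kind both $v_a$ and $v_{a+1}$ are still present, so the swap is a transposition within an $n$-tuple and contributes a sign $-1$; jointly these contribute $(-1)^{n-1}=-1$. For the two remaining factors one checks that removing the $a$-th entry of the swapped tuple reproduces the original factor indexed by $i=a+1$, and removing the $(a+1)$-st entry reproduces the original factor indexed by $i=a$; thus this pair of factors is merely interchanged and their product is unchanged. Hence $\pcoc$ changes sign under the transposition, which proves it is alternating; in particular it vanishes whenever two projective points coincide (as is anyway immediate, since then some factor contains a repeated vector).

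I do not anticipate a real difficulty: the entire argument is a bookkeeping of signs. The one point that repays attention is, in the alternating step, confirming that the two factors indexed by the transposed positions simply get interchanged with one another — so that their product is unaffected and the net sign comes only from the other $n-1$ factors. With that in hand the parity $(-1)^{n-1}=-1$ is forced and the lemma follows.
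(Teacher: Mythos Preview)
Your proof is correct and follows the same elementary approach as the paper: direct sign bookkeeping from the definition of $\pcoc$ and the basic properties of $\ori$. The paper treats the lemma as immediate (it carries only a \qed, with the descent justified by the one-line scaling observation preceding the statement), whereas you spell out the equivariance and alternating verifications in full; no substantive difference.
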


One can check explicitly that this map is an a.e.\ cocycle; more precisely:

\begin{prop}\label{prop:dPE=0}
Let $v_{0},\ldots,v_{n+1}\in\RR^{n}$ be hereditarily spanning. Then
\[
d \pcoc(v_{0},\ldots,v_{n+1})=0.\]
\end{prop}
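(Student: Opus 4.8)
The plan is to peel the configuration down to a single real parameter and then reduce the identity to an elementary statement about signs. Write $D_{\hat i}(u_0,\dots,u_n)=\det(u_0,\dots,\widehat{u_i},\dots,u_n)$, so that $\pcoc(u_0,\dots,u_n)=\sig\big(\prod_{i=0}^{n}D_{\hat i}(u_0,\dots,u_n)\big)$ and the whole statement becomes a product of signs. As recorded in Lemma~\ref{lem:pcoc}, $\pcoc$ — hence also $d\pcoc$ — is unchanged when a single argument is rescaled, and transforms under $g\in\GL_n(\RR)$ by $d\pcoc(gv_0,\dots,gv_{n+1})=\sig(g)\,d\pcoc(v_0,\dots,v_{n+1})$ (here $n$ even makes $\sig(g)^{n+1}=\sig(g)$). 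Given a hereditarily spanning $(n+2)$-tuple $(v_0,\dots,v_{n+1})$, the vectors $v_1,\dots,v_n$ form a basis; applying the $g\in\GL_n(\RR)$ sending $v_i$ to $e_i$ $(1\le i\le n)$ and then rescaling each vector we may assume $v_0=e_0$. Example~\ref{ex:spanning} then forces the last vector $x=v_{n+1}=(x_1,\dots,x_n)$ to have pairwise distinct nonzero coordinates. Since $d\pcoc(v_0,\dots,v_{n+1})=\pm\,d\pcoc(e_0,e_1,\dots,e_n,x)$, it suffices to treat this standard configuration.

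On the standard configuration the coboundary has $n+2$ terms: one dropping $x$, one dropping $e_0$, and $n$ dropping some $e_k$. I would evaluate each by cofactor expansion of the relevant $n\times n$ determinants along the row(s) left uncovered by the surviving $e_i$'s — $e_0$ contributing its all-ones entries and $x$ its coordinates — so that, for instance, $\pcoc(e_0,\dots,\widehat{e_j},\dots,e_n,x)$ is, up to a sign, $\sig(x_j)\prod_{i\ne j}\sig(x_j-x_i)$. (Equivalently one may use $\pcoc(u_0,\dots,u_n)=(-1)^{n(n+1)/2}\,\sig\big(\prod_{p}\gamma_p\big)$, valid for hereditarily spanning $(u_0,\dots,u_n)$, where $\sum_p\gamma_p u_p=0$ is the relation normalized by Cramer's rule $\gamma_p=(-1)^pD_{\hat p}(u_0,\dots,u_n)$, and read off the relation by inspection — it is $-x_je_0+\sum_{l\ne j}(x_j-x_l)e_l+x=0$ for the term dropping $e_j$.) Collecting the powers of $-1$ — this is where $n$ even is used several times — one arrives at
\[
 d\pcoc(e_0,e_1,\dots,e_n,x)=\pm\Big(\,\prod_{i=1}^{n}\sig(x_i)\;-\;\sum_{j=1}^{n}\sig(x_j)\prod_{i\ne j}\sig(x_i-x_j)\;-\;1\,\Big),
\]
the leading sign being an explicit, and now irrelevant, power of $-1$.

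It remains to show the bracket vanishes. It is symmetric in the $x_i$, so assume $x_1<\dots<x_n$; then $\prod_{i\ne j}\sig(x_i-x_j)=(-1)^{j-1}$ and the middle term becomes $\sum_{j=1}^n(-1)^{j-1}\sig(x_j)$. Let $m$ be the number of negative $x_i$, so $\sig(x_j)=-1$ exactly for $j\le m$. Splitting the alternating sum at $j=m$ and using $\sum_{j=1}^n(-1)^{j-1}=0$ (as $n$ is even) gives $\sum_{j=1}^n(-1)^{j-1}\sig(x_j)=-2\sum_{j=1}^m(-1)^{j-1}$, which is $-2$ when $m$ is odd and $0$ when $m$ is even; this is exactly $\prod_i\sig(x_i)-1=(-1)^m-1$. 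Hence the bracket is $0$, so $d\pcoc=0$ on the standard configuration, and therefore everywhere by the reduction above.

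I expect the middle step to be the main obstacle: correctly bookkeeping the signs of the many cofactor expansions — equivalently, the scalar relating the explicit relation to its Cramer normalization — and keeping all parities straight so as to land on exactly that bracketed expression. Once the formula for $d\pcoc$ on the standard configuration is in hand, both the normalization of the first step and the sign identity of the last are short.
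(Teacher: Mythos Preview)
Your proof is correct and follows essentially the same route as the paper's explicit proof: reduce by equivariance and scaling to the configuration $(e_0,e_1,\dots,e_n,x)$, compute each term of $d\pcoc$ as a product of signs, and verify the resulting alternating-sum identity parametrized by the number of negative coordinates of~$x$. The only difference is organizational --- the paper sorts the coordinates of $x$ first via monomial matrices and then computes, whereas you compute first and sort at the end --- and the paper also records a second, non-computational proof via local constancy of $d\pcoc$ on hereditarily spanning tuples.
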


\begin{proof}[Explicit Proof]
Using transitivity properties, it suffices to consider $(n+2)$-tuples $v_i$ of the form $(e_0, e_1, \ldots, e_n, x)$; the coordinates of $x$ are non-zero and distinct. Moreover, applying monomial matrices we can assume that they are arranged in increasing order; this might permute $e_1, \ldots, e_n$ but we can rearrange the latter since $\pcoc$ is alternating. Let thus $k\in\{0, \ldots, n\}$ be such that $x_{1}<x_{2}<\ldots<x_{k}<0<x_{k+1}<\ldots<x_{n}$. One now checks
\[
\ori(e_{1},\ldots,\widehat{e_{j}},\ldots,e_{n},x)=(-1)^{j}\cdot\sig(x_{j})=
\begin{cases}
(-1)^{j+1} &\text{if $1\leq j\leq k$,}\\
(-1)^j &\text{if $k<j\leq n$,}
\end{cases}
\]
\[
\ori(e_{0},\ldots,\widehat{e_{i}},\ldots,\widehat{e_{j}},\ldots,e_{n},x)=(-1)^{i+j+1} \kern5mm 1\leq i<j\leq n,
\]
\[
\ori(e_{0},e_{1},\ldots,\widehat{e_{j}},\ldots,e_{n})=(-1)^{j+1} \kern5mm 1\leq j\leq n.
\]
We can thus compute
\begin{align*}
\pcoc(e_{1},\ldots,e_{n},x) &= (-1)^{n/2} (-1)^k,\\
\pcoc(e_{0},e_{1},\ldots,\widehat{e_{i}},\ldots,e_{n},x) &= (-1)^i \sig(x_i) \Bigg(\prod_{j=1}^{i-1}(-1)^{i+j+1}\Bigg)\Bigg(\prod_{j=i+1}^{n}(-1)^{i+j+1}\Bigg) (-1)^{i+1}\\
&= (-1)^{n/2}\sig(x_i), \kern5mm (\text{here $1\leq i\leq n$})\\
\pcoc(e_{0},e_{1},\ldots,e_{n}) &= \prod_{j=1}^{n}(-1)^{j+1}=(-1)^{n/2}.
\end{align*}
The cocycle relation becomes
\begin{align*}
d \pcoc(e_{0},e_{1},\ldots,e_{n},x) &= \sum_{i=0}^{n}(-1)^{i}\pcoc(e_{0},\ldots,\widehat{e_{i}},\ldots,e_{n},x)-\pcoc(e_{0},\ldots,e_{n})\\
&= (-1)^{n/2}\left[ (-1)^{k} - \sum_{i=1}^{k}(-1)^{i} + \sum_{i=k+1}^{n}(-1)^{i} - 1\right]
\end{align*}
which vanishes indeed; we used throughout that $n$ is even.
\end{proof}

The proposition can also be derived without any computation if one uses the (independent) fact that there has to be \emph{some} $L^\infty$-cocycle, as follows from the boundedness of the Euler class in light of arguments given in the proof of Theorem~\ref{thm:coho:proj}.

\begin{proof}[Alternate proof of Proposition~\ref{prop:dPE=0}]
The sets $H_k$ of hereditarily spanning $k$-tuples are open dense in $(\RR^n)^k$ and preserved under omitting variables as long as at least $n$ variables are left. Therefore, since $\ori$ is locally constant on $H_n$, we deduce that $\pcoc$ and $d\pcoc$ are locally constant on $H_{n+1}$ and $H_{n+2}$. However, if we know that $d\pcoc$ vanishes almost everywhere, it now follows that it vanishes everywhere on $H_{n+2}$.
\end{proof}

Yet another viewpoint will emerge in Section~\ref{sec:ss}.

\section{A cocycle on the flag space}\label{sec:coc}

We have seen that $\pcoc$ cannot be promoted to be a true cocycle on the projective space. We shall remedy this situation by blowing up the singular (non-hereditarily-spanning) locus and working with complete oriented flags. By an iterative deformation construction, this leads to a cocycle~$\coco$ in Theorem~\ref{thm:dE=0} below. An added benefit is that our modified cocycle $\coco$ will take only the values~$\pm 1$. We then deflate this cocycle to the usual flag space, still keeping the same values as $\pcoc$ on the hereditarily spanning tuples.

\bigskip

Denote by $\dr(\RR^{n})$ the set of complete flags $F$ in $\RR^{n}$,
\[
F: \ F^0=\{0\}\subset F^{1}\subset\ldots\subset F^{n-1}\subset F^{n}=\RR^{n},\]
where each $F^{i}$ is an $i$-dimensional subspace of $\RR^{n}$. The set $\odr(\RR^{n})$ of complete \emph{oriented} flags consists of complete flags $F$ where each $F^{i}$ is furthermore endowed with an orientation. Equivalently, each $F^{i}$ is given together with the choice of an open half space $(F^{i})^{+}$ bounded by $F^{i-1}$. The positive orientation on $F^{i}$ will then be determined by any basis $(v_{1},\ldots,v_{i-1},x)$, where $(v_{1},\ldots,v_{i-1})$ is a positively oriented basis of $F^{i-1}$ and $x\in(F^{i})^{+}$. Note that $\odr(\RR^{n})$ is a $2^{n}$-cover of $\dr(\RR^{n})$.

Let $k\in\{0,1,\ldots,n-1\}$, let $W$ be a $k$-dimensional oriented subspace of $\RR^{n}$ and let $F\in\odr(\RR^{n})$ be a complete oriented flag. Define a $(k+1)$-dimensional oriented subspace $\left[W,F\right]$ of $\RR^{n}$ as follows: Let $d$ be the unique integer $1\leq d\leq k+1$ with
\[
F^{d-1}\subset W\;\mbox{and}\; F^{d}\nsubseteq W.\]
Define $[W,F]$ to be the vector space generated by $W$ and $F^{d}$, endowed with the orientation given by $(w_{1},\ldots,w_{k},x)$, where $(w_{1},\ldots,w_{k})$ is a positively oriented basis of $W$ and $x\in(F^{d})^{+}$.

Given complete oriented flags $F_{1},\ldots,F_{k}\in\odr(\RR^{n})$, we define a $k$-dimensional oriented vector space $[F_{1},\ldots,F_{k}]$ inductively as follows: For $k=1$, let $[F_{1}]=F_{1}^{1}$. For $k>1$, let $[F_{1},\ldots,F_{k}]=[[F_{1},\ldots,F_{k-1}],F_{k}]$.

\begin{rem}\label{rem:[]}
If the lines $F_i^1$ are linearly independent, then simply $[F_1, \ldots, F_k] = \langle F_1^1, \ldots, F_k^1\rangle$. At the other extreme, if all $F_i$ are the same oriented flag $F\in\odr(\RR^{n})$, then $[F,\ldots,F]=F^{k}$.
\end{rem}

Define a function
\[
\coco:(\odr(\RR^{n}))^{n+1}\longrightarrow\{-1,1\}, \kern5mm \coco(F_{0},\ldots,F_{n})=\prod_{i=0}^{n}\ori([F_{0},\ldots,\widehat{F_{i}},\ldots,F_{n}]).\]

\begin{lem}
$\coco$ is $\GL_n(\RR)$-equivariant.
\end{lem}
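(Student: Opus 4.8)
The plan is to reduce $\GL_n(\RR)$-equivariance of $\coco$ to the equivariance of the building block $[F_1,\ldots,F_k]$ under the natural action of $\GL_n(\RR)$ on oriented flags, together with the behaviour of $\ori$ under a linear map. Recall that for $g\in\GL_n(\RR)$ acting on an oriented subspace $W$ of dimension $k$, one has $\ori(gW) = \sig(g)\cdot\ori(W)$: indeed, if $(w_1,\ldots,w_k)$ is a positively oriented basis of $W$, extend it to a basis $(w_1,\ldots,w_n)$ of $\RR^n$; the sign of $(gw_1,\ldots,gw_n)$ relative to the canonical orientation is $\sig(g)\cdot\ori(w_1,\ldots,w_n)$, and the orientation $W$ inherits is twisted the same way. (One has to observe that this sign is independent of the chosen completion of the basis, which is standard.)

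First I would check that the operation $[W,F]$ is equivariant: for $g\in\GL_n(\RR)$, the integer $d$ attached to $(W,F)$ in the definition is the same as the one attached to $(gW, gF)$, since $g$ carries the flag inclusions $F^{d-1}\subset W$, $F^d\not\subset W$ to $gF^{d-1}\subset gW$, $gF^d\not\subset gW$; and $g$ maps a positively oriented basis $(w_1,\ldots,w_k)$ of $W$ together with $x\in(F^d)^+$ to a positively oriented basis $(gw_1,\ldots,gw_k)$ of $gW$ (with respect to the orientation $gW$ inherits) together with $gx\in(gF^d)^+$. Hence $[gW, gF] = g[W,F]$ as oriented spaces. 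By the inductive definition of $[F_1,\ldots,F_k]$ — base case $[F_1]=F_1^1$, which is obviously equivariant, and inductive step $[F_1,\ldots,F_k]=[[F_1,\ldots,F_{k-1}],F_k]$ — an immediate induction on $k$ gives $[gF_1,\ldots,gF_k] = g[F_1,\ldots,F_k]$ for all $g\in\GL_n(\RR)$.

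Then the computation of $\coco(gF_0,\ldots,gF_n)$ is routine: each factor becomes $\ori\bigl(g[F_0,\ldots,\widehat{F_i},\ldots,F_n]\bigr) = \sig(g)\cdot\ori\bigl([F_0,\ldots,\widehat{F_i},\ldots,F_n]\bigr)$, and since there are $n+1$ factors in the product and $n$ is even, the total sign picked up is $\sig(g)^{n+1} = \sig(g)$. Wait — that would say $\coco(gF_0,\ldots,gF_n) = \sig(g)\coco(F_0,\ldots,F_n)$, i.e. $\coco$ is equivariant for the $\RRo$-action, which is exactly what is wanted (mirroring the fact, visible already in the definition of $\pcoc$, that the product of $n+1$ determinant-type signs transforms by $\sig(g)$); if the lemma is stated with the trivial action in mind one should read ``$\GL_n(\RR)$-equivariant'' as equivariant into $\RRo$, consistently with $\pcoc$. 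Either way the verification is the same display.

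I do not expect a genuine obstacle here; the only point requiring a little care is the well-definedness lemma $\ori(gW)=\sig(g)\ori(W)$ — specifically that the sign does not depend on how one completes a basis of $W$ to a basis of $\RR^n$ — and the bookkeeping that the integer $d$ in $[W,F]$ is a $\GL_n(\RR)$-invariant of the pair $(W,F)$. Both are elementary linear algebra. I would therefore present the proof as: (1) the sign lemma for $\ori$ under $\GL_n(\RR)$; (2) equivariance of $[W,F]$, hence of $[F_1,\ldots,F_k]$ by induction; (3) the one-line sign count using that $n$ is even.
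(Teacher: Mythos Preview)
Your proposal is correct and follows exactly the paper's approach: the paper's proof is the one-line observation that $[gF_1,\ldots,gF_n]=g[F_1,\ldots,F_n]$ and $\ori(g[F_1,\ldots,F_n])=\sig(g)\cdot\ori([F_1,\ldots,F_n])$, and you simply unpack both facts (equivariance of $[W,F]$, induction, then the sign count with $n+1$ factors). One small remark: your discussion of $\ori(gW)$ for a $k$-dimensional $W$ via completing bases is unnecessary and slightly off, since in this paper $\ori$ is only defined when the oriented space is all of $\RR^n$; in the application each $[F_0,\ldots,\widehat{F_i},\ldots,F_n]$ is already $n$-dimensional, so the identity $\ori(gV)=\sig(g)\,\ori(V)$ is immediate from the definition of the determinant and no basis-completion argument is needed.
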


\begin{proof}
This follows from $[gF_{1},\ldots,gF_{n}] = g[F_{1},\ldots,F_{n}]$ and $\ori(g[F_{1},\ldots,F_{n}]) = \sig(g)\cdot\ori([F_{1},\ldots,F_{n}])$ for $g\in\GL_n(\RR)$.
\end{proof}

\begin{thm}\label{thm:dE=0}
$d \coco(F_{0},\ldots,F_{n+1})=0$ for any $F_{0},\ldots,F_{n+1}\in\odr(\RR^{n})$.
\end{thm}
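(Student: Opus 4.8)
The plan is to reduce the identity $d\coco = 0$ on all of $(\odr(\RR^n))^{n+2}$ to the already-established vanishing $d\pcoc = 0$ on hereditarily spanning tuples (Proposition~\ref{prop:dPE=0}), by a density/continuity argument on the flag space followed by a blow-up argument on the singular locus. First I would record that $\coco$ descends along the covering $\odr(\RR^n)\to\dr(\RR^n)$ only generically: when the flagstaffs $F_0^1,\ldots,F_n^1$ are linearly independent, Remark~\ref{rem:[]} gives $[F_{0},\ldots,\widehat{F_{i}},\ldots,F_{n}] = \langle F_0^1,\ldots,\widehat{F_i^1},\ldots,F_n^1\rangle$, so $\coco(F_0,\ldots,F_n) = \pcoc(F_0^1,\ldots,F_n^1)$ there. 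Hence on the open dense locus of $(\odr(\RR^n))^{n+2}$ where the $n+2$ flagstaffs are in general position (every $n$ of them independent, i.e. hereditarily spanning), each of the $n+2$ terms of $d\coco$ agrees with the corresponding term of $d\pcoc$ evaluated on flagstaffs, so $d\coco = 0$ there by Proposition~\ref{prop:dPE=0}.

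It remains to handle the singular locus where some flagstaffs fail to be in general position; here the $[\,\cdot\,]$ operation genuinely uses the higher strata of the flags, and this is where the real content lies. The approach is a stratification by how degenerate the configuration of flagstaffs is, together with an induction. The key structural fact to exploit is the inductive definition $[F_1,\ldots,F_k] = [[F_1,\ldots,F_{k-1}],F_k]$ and, crucially, the behaviour of $[W,F]$ under small perturbations: the integer $d = d(W,F)$ (the first step of $F$ not contained in $W$) is \emph{upper} semicontinuous, and on the locus where it is locally constant, $[W,F]$ depends continuously — indeed locally constantly in orientation — on $(W,F)$. So I would argue that $\coco$ is locally constant on each stratum, and then analyze what happens across a stratum boundary: when two flagstaffs $F_i^1, F_j^1$ collide (or a flagstaff enters a subspace spanned by others), the operation $[\,\cdot\,]$ resolves the collision using the next flag step, which is precisely the ``blow-up'' alluded to in the text. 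The cocycle identity on a lower stratum should then follow by writing $d\coco$ at a degenerate point as a limit, or rather as a finite algebraic consequence, of the identity on the adjacent top stratum, using that omitting a variable is compatible with the stratification.

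Concretely, the cleanest route is probably an induction on $n$ combined with a ``collision at the bottom'' reduction: given an arbitrary $(n+2)$-tuple $(F_0,\ldots,F_{n+1})$, pick the smallest $m$ such that $F_0^1,\ldots,F_m^1$ are linearly dependent; replace $F_m$ by its image in the quotient flag modulo $[F_0,\ldots,F_{m-1}]$ — essentially running the definition of $[\,\cdot\,]$ one step — and relate $d\coco$ for the original tuple to a $d\coco$ (or $d\pcoc$) computation in a lower-dimensional space, where the inductive hypothesis (or Proposition~\ref{prop:dPE=0}) applies. The signs work out because $n$ is even, exactly as in the explicit proof of Proposition~\ref{prop:dPE=0}. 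The main obstacle I anticipate is bookkeeping the orientations: $[W,F]$ is orientation-sensitive and the half-space $(F^d)^+$ must be tracked carefully through the reduction, so the principal work is to verify that the orientation signs produced by the inductive $[\,\cdot\,]$ construction on the singular locus assemble into the same $(-1)^{n/2}$ pattern that made $d\pcoc$ vanish generically — equivalently, that the blow-up is ``orientation-coherent.'' Once that compatibility is nailed down, the vanishing is forced on every stratum and hence everywhere.
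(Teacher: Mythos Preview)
Your reduction on the generic locus is correct and is exactly the endpoint the paper aims for: once $(F_0^1,\ldots,F_{n+1}^1)$ is hereditarily spanning, Remark~\ref{rem:[]} gives $\coco=\pcoc$ on every face and Proposition~\ref{prop:dPE=0} finishes. The gap is in how you propose to handle the singular locus.

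Neither of your two suggested mechanisms is likely to close. The limit/stratification idea runs into the fact that $\coco$ is genuinely \emph{discontinuous} across strata: as $W$ specialises to contain $F^1$, the index $d=d(W,F)$ jumps and the oriented space $[W,F]$ changes abruptly, so knowing $d\coco=0$ on the open dense set says nothing about its value on the boundary. (This is not a hypothetical danger: Proposition~\ref{prop:dPE_neq0} shows that the naive projective cocycle $\pcoc$, which agrees with $\coco$ generically, really does fail the cocycle relation on its singular locus.) Your second mechanism, induction on $n$ via a quotient by $[F_0,\ldots,F_{m-1}]$, does not respect the structure of $\coco$: the value $\coco(F_0,\ldots,F_n)$ is a product over \emph{all} omissions $\widehat{F_i}$, each of which uses the full flag data of the remaining $F_j$ in $\RR^n$, and there is no evident way to factor this through a quotient picked out by one particular collision. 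The orientation bookkeeping you flag as ``the main obstacle'' is not just bookkeeping here; without a new structural input it is the whole problem.

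The paper supplies that input and takes a completely different route: it never reduces the dimension and never stratifies. Instead it proves (Proposition~\ref{prop:Fi->xi}) that for \emph{any} tuple $F_0,\ldots,F_{n+1}\in\odr(\RR^n)$ one can find actual vectors $x_0,\ldots,x_{n+1}\in\RR^n$ --- automatically hereditarily spanning --- such that
\[
\ori\big([F_0,\ldots,\widehat{F_i},\ldots,\widehat{F_j},\ldots,F_{n+1}]\big)=\ori(x_0,\ldots,\widehat{x_i},\ldots,\widehat{x_j},\ldots,x_{n+1})
\]
for all $i<j$. This immediately gives $\coco(F_0,\ldots,\widehat{F_i},\ldots,F_{n+1})=\pcoc(x_0,\ldots,\widehat{x_i},\ldots,x_{n+1})$ for every $i$, hence $d\coco=d\pcoc=0$. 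The points $x_k$ are produced by a downward induction on $k$ (replacing $F_{n+1},F_n,\ldots$ one at a time), and the engine is Lemma~\ref{lemma:Or(V,x)=Or(V,F)}: given finitely many oriented hyperplanes $V_1,\ldots,V_q$ and a flag $F$, there is a single vector $x$ with $\ori(V_i\oplus\langle x\rangle)=\ori([V_i,F])$ for every $i$. This ``one point simulates one flag against all hyperplanes simultaneously'' lemma is the idea your proposal is missing.
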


\begin{lem}\label{lemma:Or(V,x)=Or(V,F)}
Let $V_{1},\ldots,V_{q}$ be $(n-1)$-dimensional oriented subspaces of $\RR^{n}$, where $q\in\NN$ is arbitrary. For any $F\in\odr(\RR^{n})$ there exists $x\in\RR^{n}\setminus\bigcup_{i=1}^{q}V_{i}$ such that
\[
\ori\left(V_{i}\oplus\langle x\rangle\right)=\ori(\left[V_{i},F\right]),\]
for every $i=1,\ldots,q$. 
\end{lem}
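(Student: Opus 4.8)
The idea is to choose $x$ in the ``correct'' open half-space determined by $F$ relative to each $V_i$ simultaneously, and to argue that the relevant half-spaces have nonempty common intersection that also avoids all the $V_i$. First I would unpack the definition of $[V_i, F]$: since $\dim V_i = n-1$, the integer $d = d_i$ from the definition of $[W,F]$ is the unique index with $F^{d_i - 1}\subset V_i$ and $F^{d_i}\not\subset V_i$, and $[V_i,F]$ is spanned by $V_i$ together with $F^{d_i}$, oriented by $(w_1,\dots,w_{n-1}, y)$ with $(w_1,\dots,w_{n-1})$ a positively oriented basis of $V_i$ and $y\in (F^{d_i})^+$. On the other hand $\ori(V_i\oplus\langle x\rangle)$ is computed from $(w_1,\dots,w_{n-1},x)$. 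So the condition $\ori(V_i\oplus\langle x\rangle) = \ori([V_i,F])$ is simply the requirement that $x$ lie on the \emph{same side} of $V_i$ as the chosen vector $y\in(F^{d_i})^+$; equivalently, writing $\ell_i$ for a linear functional on $\RR^n$ vanishing exactly on $V_i$, the condition is that $\ell_i(x)$ and $\ell_i(y)$ have the same (nonzero) sign for any (equivalently, some) $y\in (F^{d_i})^+$. Note $\ell_i$ does not vanish identically on $F^{d_i}$ precisely because $F^{d_i}\not\subset V_i$, and it is constant in sign on the open half-space $(F^{d_i})^+$ off the hyperplane $F^{d_i}\cap V_i$ of $F^{d_i}$; so the target sign $\sigma_i := \sig(\ell_i(y))\in\{-1,1\}$ is well-defined.

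The lemma thus reduces to: the open set $U = \bigcap_{i=1}^q \{x : \sig(\ell_i(x)) = \sigma_i\}$ is nonempty (it automatically avoids all $V_i$, since $\ell_i(x)\neq 0$ there). The cleanest way I would see this is to produce an explicit point using $F$ itself. Pick $z\in (F^n)^+ = \RR^n$; more usefully, work down the flag: for each $i$, a vector in $(F^{d_i})^+$ already has $\ell_i$-value of sign $\sigma_i$ by definition. The subtlety is that the indices $d_i$ may differ, so no single $F^d$ works for all $i$ at once. To get around this I would take $x$ of the form $x = y_n + \varepsilon_{n-1} y_{n-1} + \dots + \varepsilon_1 y_1$ where $y_j\in(F^j)^+$ is a fixed choice for each $j$ and $\varepsilon_{n-1}\gg \varepsilon_{n-2}\gg\dots\gg\varepsilon_1 > 0$ are chosen successively small. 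The point is that $\ell_i(x) = \ell_i(y_{d_i}) + \sum_{j<d_i}\varepsilon_j \ell_i(y_j)$ because $\ell_i$ kills $F^{d_i-1}\supseteq F^j$ for $j<d_i$ — wait, that makes those terms vanish — so in fact $\ell_i(x) = \ell_i(y_{d_i}) + \sum_{j>d_i}\varepsilon_j\ell_i(y_j)$, and since $\ell_i(y_{d_i})$ is a fixed nonzero number of sign $\sigma_i$, choosing the $\varepsilon_j$ (for $j > d_i$, hence $j$ up to $n-1$) small enough keeps the sign. Carrying this out requires choosing the $\varepsilon_j$ from the top down: $\varepsilon_{n-1}$ small enough to handle all $i$ with $d_i \le n-2$ (controlling the $j=n-1$ term against the fixed value $\ell_i(y_{d_i})$), then $\varepsilon_{n-2}$ small relative to the now-fixed data, and so on. A finite descending induction on $j$ does it.

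The main obstacle, and the only genuinely delicate point, is exactly this bookkeeping: ensuring that the perturbation terms $\varepsilon_j\ell_i(y_j)$ with $j>d_i$ never overwhelm the leading term $\ell_i(y_{d_i})$, uniformly over the finitely many indices $i=1,\dots,q$. Since there are finitely many $i$ and for each the leading term is a \emph{fixed} nonzero scalar, one just takes, at stage $j$ (going from $j=n-1$ down to $j=1$), $\varepsilon_j$ smaller than $\min_i |\ell_i(y_{d_i})| / (1 + \sum_{j'} |\ell_i(y_{j'})|)$ or any similar crude bound — no real estimate is needed, only that a strictly positive choice exists. Once $x$ is produced, the equalities $\ori(V_i\oplus\langle x\rangle) = \ori([V_i,F])$ hold by the sign computation above, and $x\notin\bigcup V_i$ is immediate from $\ell_i(x)\neq 0$. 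I would present the $\ell_i$/half-space reformulation as the conceptual core and relegate the $\varepsilon_j$ selection to a short inductive paragraph.
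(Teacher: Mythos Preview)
Your reformulation via linear functionals $\ell_i$ and target signs $\sigma_i$ is correct and clarifying; in particular $F^{d_i}\cap V_i=F^{d_i-1}$, so $\ell_i$ has constant nonzero sign on $(F^{d_i})^+$ and $\sigma_i$ is well-defined. However, the construction of $x$ is carried out with the weights reversed. In your ansatz $x = y_n + \varepsilon_{n-1}y_{n-1} + \cdots + \varepsilon_1 y_1$, the terms surviving in $\ell_i(x)$ are those with $j\ge d_i$, so actually
\[
\ell_i(x) \;=\; \ell_i(y_n) + \varepsilon_{n-1}\ell_i(y_{n-1}) + \cdots + \varepsilon_{d_i}\ell_i(y_{d_i});
\]
your displayed formula drops the coefficient $\varepsilon_{d_i}$ and omits the $y_n$-contribution. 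The summand whose sign you control is $\varepsilon_{d_i}\ell_i(y_{d_i})$, but with $\varepsilon_{n-1}\gg\cdots\gg\varepsilon_1$ this is the \emph{smallest} of the surviving terms; in particular the uncontrolled term $\ell_i(y_n)$ (coefficient~$1$) can have the wrong sign and dominates everything else whenever $d_i<n$. The fix is simply to reverse the hierarchy: take $x = y_1 + \varepsilon_2 y_2 + \cdots + \varepsilon_n y_n$ with $1\gg\varepsilon_2\gg\cdots\gg\varepsilon_n>0$, chosen inductively. Then $\varepsilon_{d_i}\ell_i(y_{d_i})$ genuinely dominates the tail $\sum_{j>d_i}\varepsilon_j\ell_i(y_j)$ and your sign argument goes through verbatim.

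With this correction your argument is essentially the paper's, phrased algebraically rather than geometrically. The paper constructs a sequence $x_1,\ldots,x_n$ with $x_d\in(F^d)^+$ and $x_d$ chosen in a small convex neighbourhood of $x_{d-1}$ (so that the segment $[x_{d-1},x_d]$ never crosses any $V_i$ unless already $x_{d-1}\in V_i$), then sets $x=x_n$; in coordinates adapted to $F$ this $x_n$ is precisely of the form $y_1+\varepsilon_2 y_2+\cdots+\varepsilon_n y_n$ with rapidly decreasing $\varepsilon_j$. Your linear-functional language makes the half-space picture explicit, while the paper's path-of-points formulation makes the inductive choice of ``small enough'' more transparent.
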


\begin{proof}
Let $x_{1},\ldots,x_{n}\in\RR^{n}$ be a sequence of points $x_{d}\in(F^{d})^{+}$ with the following property: For every $i=1,\ldots,q$, the intersection of $V_{i}$ with the affine segment $[x_{d-1},x_{d}]$ is either empty, equal to $\{x_{d-1}\}$ or to the whole segment. Let us prove by induction that such a sequence exists: For $d=1$, take any $x_{1}\in(F^{1})^{+}$. Suppose that $x_{1},\ldots,x_{d-1}$ have been constructed. Let $U$ be a convex neighbourhood of $x_{d-1}$ such that, for every $i=1,\ldots,q$, if $V_{i}\cap U\neq\emptyset$, then $x_{d-1}\in V_{i}$. Any $x_{d}\in U\cap(F^{d})^{+}$ will work. 

To prove the lemma, it suffices to take $x=x_{n}$. Indeed, for every
$i=1,\ldots,q$, let $d_{i}$ be such that $F^{d_{i}-1}\subset V_{i}$
and $F^{d_{i}}\nsubseteq V_{i}$. Then, by definition, for any $y\in\left(F^{d_{i}}\right)^{+}$,
and in particular for $x_{d_{i}}\in(F^{d_{i}})^{+}$
\[
\ori([V_{i},F])=\ori(V_{i}\oplus\langle y\rangle)=\ori(V_{i}\oplus\langle x_{d_{i}}\rangle).\]
As $x_{d_{i}}\notin V_{i}$, the points $x_{d_{i}},x_{d_{i}+1},\ldots,x_{n}$
do by construction all lie on the same half space with respect to $V_{i}$,
so that
\[
\ori(V_{i}\oplus\langle x_{d_{i}}\rangle)=\ori(V_{i}\oplus\langle x_{d_{i}+1}\rangle)=\ldots=\ori(V_{i}\oplus\langle x_{n}\rangle),\]
which finishes the proof of the lemma. 
\end{proof}

Observe that $x$ could be any point in the same connected component of ${\RR^{n}\setminus\bigcup_{i} V_{i}}$ as $x_{n}$. This
is the unique connected component $C$ such that the intersection $\overline{C}\cap(F^{d})^{+}$ is non-empty for every $d=1,2,\ldots,n$.

\begin{prop}\label{prop:Fi->xi}
Let $F_{0},\ldots,F_{n+1}\in\odr(\RR^{n})$ be complete oriented flags. There exist $x_{0},\ldots,x_{n+1}\in\RR^{n}$ such that
\[
\ori([F_{0},\ldots,\widehat{F_{i}},\ldots,\widehat{F_{j}},\ldots,F_{n+1}]) = \ori(x_{0},\ldots,\widehat{x_{i}},\ldots,\widehat{x_{j}},\ldots,x_{n+1})
\]
for every $0\leq i<j\leq n+1$. 
\end{prop}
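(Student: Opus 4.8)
The plan is to produce the points $x_0, \ldots, x_{n+1}$ one at a time by a greedy induction, at each stage invoking Lemma~\ref{lemma:Or(V,x)=Or(V,F)} to match all the orientation constraints that become relevant when the new point is added. Observe first that for a fixed pair $i<j$, the oriented $n$-dimensional space $[F_0, \ldots, \widehat{F_i}, \ldots, \widehat{F_j}, \ldots, F_{n+1}]$ is built inductively as $[W_{ij}, F_m]$ where $W_{ij} = [F_0, \ldots, \widehat{F_i}, \ldots, \widehat{F_j}, \ldots, \widehat{F_m}, \ldots, F_{n+1}]$ is an $(n-1)$-dimensional oriented subspace and $F_m$ is the last flag in the list (the largest index $m \notin \{i,j\}$). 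So the natural order in which to construct the $x$'s is by increasing index: having constructed $x_0, \ldots, x_{k-1}$, we construct $x_k$ so that for every pair $i<j$ with $j = k$ (equivalently, every $i<k$), the orientation of $x_0, \ldots, \widehat{x_i}, \ldots, \widehat{x_j}, \ldots, x_{n+1}$ — which at that point is the orientation of the $n-1$ already-placed vectors together with $x_k$ as the last slot — matches $\ori([F_0, \ldots, \widehat{F_i}, \ldots, \widehat{F_k}, \ldots, F_{n+1}])$.

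More precisely, I would set up the induction as follows. Fix $k \in \{2, \ldots, n+1\}$ and suppose $x_0, \ldots, x_{k-1}$ have been chosen so that all constraints with $j \le k-1$ hold, and moreover so that the relevant sub-tuples are in "general position" in the sense needed below. For each $i$ with $0 \le i < k-1$, consider the $(n-1)$-tuple $T_i$ obtained from $(x_0, \ldots, x_{k-1})$ by deleting $x_i$ and (mentally) leaving room for $x_k$ in the last slot; I want to arrange that $T_i$ spans an $(n-1)$-dimensional subspace $V_i \subseteq \RR^n$, oriented by $T_i$. There are also constraints of the form $i < j = k$ with $i = k-1$: deleting $x_{k-1}$ and inserting $x_k$, so the relevant $(n-1)$-tuple is $(x_0, \ldots, x_{k-2})$ with $x_k$ in the last slot, spanning $V_{k-1}$. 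Then Lemma~\ref{lemma:Or(V,x)=Or(V,F)}, applied to the finite family $\{V_i : 0 \le i \le k-1\}$ of $(n-1)$-dimensional oriented subspaces and the flag $F = F_k$, yields a single point $x = x_k \notin \bigcup_i V_i$ with $\ori(V_i \oplus \langle x_k\rangle) = \ori([V_i, F_k])$ for every $i$. The point now is that by the inductive hypothesis $V_i$, as an oriented space, equals $[F_0, \ldots, \widehat{F_i}, \ldots, \widehat{F_k}, \ldots, \widehat{F_{?}}, \ldots]$ appropriately — i.e. it is the $(n-1)$-dimensional oriented bracket obtained by dropping one more index — so $[V_i, F_k]$ is exactly the oriented $n$-space $[F_0, \ldots, \widehat{F_i}, \ldots, \widehat{F_k}, \ldots, F_{n+1}]$ we are trying to match, and the condition $x_k \notin V_i$ gives that the new $(n-1)$-tuples are still spanning, propagating the general-position hypothesis. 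The base case $k=1$ (or $k=2$) is immediate: pick $x_0 \in F_0^1$, $x_1 \in F_1^1$ (and, if needed, a third generic point) so the $0$- or $1$-dimensional degenerate constraints are vacuous or trivially arranged.

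The delicate point — and the step I expect to be the main obstacle — is bookkeeping the identification of the already-built $(n-1)$-tuple $T_i$ with the oriented bracket $[F_0, \ldots, \widehat{F_i}, \ldots, \widehat{F_k}, \ldots]$ at each stage, i.e. making sure the induction hypothesis is stated with a strong enough "all lower-dimensional brackets are realized by the current points" clause that it actually feeds Lemma~\ref{lemma:Or(V,x)=Or(V,F)} correctly. This is really a statement that the construction is compatible with the recursive definition $[F_1, \ldots, F_k] = [[F_1, \ldots, F_{k-1}], F_k]$: one has to check that when we drop two indices $i<j$ from $(F_0, \ldots, F_{n+1})$, the last-added point $x_{\max\{m : m \notin \{i,j\}\}}$ plays the role of "$F_k$" in the bracket recursion and everything earlier plays the role of "$[F_1, \ldots, F_{k-1}]$", uniformly in $(i,j)$. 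Once that compatibility is pinned down, the orientation equalities are exactly what Lemma~\ref{lemma:Or(V,x)=Or(V,F)} delivers, and the $n$-dimensional case (two full deletions, leaving $n$ points) is the $k = n+1$ step of the induction; the constraints with $j \le n$ are handled at earlier steps and never disturbed, since adding a later point does not change the orientation of an $n$-tuple of earlier points. I would therefore organize the write-up as: (1) state the strengthened inductive claim including the bracket-realization clause; (2) base case; (3) inductive step via Lemma~\ref{lemma:Or(V,x)=Or(V,F)} with the family $\{V_i\}$ described above; (4) conclude by reading off the $j = n+1$ constraints, noting that all $j \le n$ constraints were secured earlier and are orientation-invariant under appending $x_{n+1}$.
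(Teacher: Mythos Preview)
Your proposal has a genuine gap: the direction of the induction runs against the grain of the bracket recursion. Recall that $[F_1,\ldots,F_k]=[[F_1,\ldots,F_{k-1}],F_k]$, so the only flag that Lemma~\ref{lemma:Or(V,x)=Or(V,F)} lets you trade for a point is the \emph{last} (highest-index) one appearing. Consequently the natural way to replace flags by points is to peel them off from the right: construct $x_{n+1}$ first, then $x_n$, and so on down to $x_0$ --- a \emph{downward} induction, which is what the paper does. At stage $k$ the paper has already produced $x_{k+1},\ldots,x_{n+1}$ and maintains, for every pair $(i,j)$, an equality between $\ori([F_0,\ldots,\widehat{F_i},\ldots,\widehat{F_j},\ldots,F_{n+1}])$ and the orientation of a ``mixed'' $n$-dimensional space $[\text{flags with index}\le k]\oplus\langle\text{vectors with index}>k\rangle$ (with the appropriate deletions). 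One then applies the Lemma to the $(n-1)$-dimensional mixed spaces obtained by further omitting position $k$, together with the flag $F_k$, to manufacture $x_k$; a sign $(-1)^{n+1-k}$ appears from commuting $x_k$ past $\langle x_{k+1},\ldots,x_{n+1}\rangle$.

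Your upward induction breaks in two related places. First, the clause ``for every pair $i<j$ with $j=k$ \ldots together with $x_k$ in the last slot'' is self-contradictory: if $j=k$ then $x_k$ is \emph{deleted} from the tuple $(x_0,\ldots,\widehat{x_i},\ldots,\widehat{x_j},\ldots,x_{n+1})$, so your choice of $x_k$ cannot influence that constraint at all. Second, at step $k$ you have only the $k$ vectors $x_0,\ldots,x_{k-1}$; deleting one or two leaves at most $k-1<n-1$ of them whenever $k<n$, so these cannot span an $(n-1)$-dimensional $V_i$ and the Lemma does not apply. You also cannot incorporate the remaining flags $F_{k+1},\ldots,F_{n+1}$ into $V_i$ to make up the dimension, because in any bracket still containing them the flag $F_k$ sits at the \emph{front}, not the back, and there is no analogue of the Lemma for stripping off the first argument. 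The fix is simply to reverse the order of construction --- build $x_{n+1},x_n,\ldots,x_0$ --- and carry the mixed-expression hypothesis described above; this is precisely the paper's argument.
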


\begin{proof}
We will prove the following claim by downwards induction on $k$, starting with $k=n$ and going down to $k=-1$. The latter case proves the proposition.

\begin{claim*}
There exist $x_{k+1},\ldots,x_{n+1}$ such that $\ori([F_{0},\ldots,\widehat{F_{i}},\ldots,\widehat{F_{j}},\ldots ,F_{n+1}])$ equals
\begin{align*}
\ori([F_{0},\ldots,\widehat{F_{i}},\ldots,\widehat{F_{j}},\ldots,F_{k}]\oplus\langle x_{k+1},\ldots,x_{n+1}\rangle)\ & \mbox{ for }0\leq i<j\leq k,\\
\ori([F_{0},\ldots,\widehat{F_{i}},\ldots,F_{k}]\oplus\langle x_{k+1},\ldots,\widehat{x_{j}},\ldots,x_{n+1}\rangle)\ & \mbox{ for }0\leq i\leq k<j\leq n+1,\\
\ori([F_{0},\ldots,F_{k}]\oplus\langle x_{k+1},\ldots,\widehat{x_{i}},\ldots,\widehat{x_{j}},\ldots,x_{n+1}\rangle)\ & \mbox{ for }k+1\leq i<j\leq n+1.
\end{align*}
\end{claim*}

\noindent\emph{Proof of the claim}. For the case $k=n$, we apply Lemma~\ref{lemma:Or(V,x)=Or(V,F)} to the family of oriented $(n-1)$-dimensional subspaces
$$[F_{0},\ldots,\widehat{F_{i}},\ldots,\widehat{F_{j}},\ldots,F_{n}] \kern1cm (i<j\leq n)$$
together with the oriented flag $F_{n+1}$ to find $x_{n+1}\in\RR^{n}$ such that
\[\ori([F_{0},\ldots,\widehat{F_{i}},\ldots,\widehat{F_{j}},\ldots,F_{n}]\oplus\left\langle x_{n+1}\right\rangle )=\ori([F_{0},\ldots,\widehat{F_{i}},\ldots,\widehat{F_{j}},\ldots,F_{n},F_{n+1}]).\]
Next, we suppose inductively that the claim is true for $k$ and establish it for ${k-1}$. The inductive assumption implies in particular that none of $x_{k+1}, \ldots, x_{n+1}$ belongs to the subspace $[F_{0},\ldots,\widehat{F_{i}},\ldots,\widehat{F_{j}},\ldots,F_{k}]$ in case $i<j\leq k$, and a similar statement for the subspaces $[F_{0},\ldots,\widehat{F_{i}},\ldots,F_{k}]$ and $[F_{0},\ldots,F_{k}]$ in the two other cases.
Let $V_{ij}$ denote the oriented $(n-1)$-dimensional subspaces
\begin{align*}
[F_{0},\ldots,\widehat{F_{i}},\ldots,\widehat{F_{j}},\ldots,F_{k-1}]\oplus\langle x_{k+1},\ldots,x_{n+1}\rangle, & \mbox{ for }0\leq i<j\leq k-1,\\
[F_{0},\ldots,\widehat{F_{i}},\ldots,F_{k-1}]\oplus\langle x_{k+1},\ldots,\widehat{x_{j}},\ldots,x_{n+1}\rangle, & \mbox{ for \ensuremath{0\leq i\leq k-1,\mbox{ }k+1\leq j\leq n+1,}}\\
[F_{0},..,F_{k-1}]\oplus\langle x_{k+1},\ldots,\widehat{x_{i}},\ldots,\widehat{x_{j}},\ldots,x_{n+1}\rangle, & \mbox{ for }k+1\leq i<j\leq n+1.
\end{align*}
Apply Lemma~\ref{lemma:Or(V,x)=Or(V,F)} to the subspaces
$V_{ij}$ and the oriented flag $F_{k}$ to find $x_{k}$ such that
\[
\ori(V_{ij}\oplus\left\langle x_{k}\right\rangle )=\ori(\left[V_{ij},F_{k}\right]).\]
We now have, for $0\leq i<j\leq k-1$,
\begin{align*}
&\ori([F_{0},\ldots,\widehat{F_{i}},\ldots,\widehat{F_{j}},\ldots,F_{k-1}]\oplus\langle x_{k},x_{k+1},\ldots,x_{n+1}\rangle)\\
=\ &(-1)^{n+1-k}\ori(V_{ij}\oplus\langle x_{k}\rangle) = (-1)^{n+1-k}\ori(\left[V_{ij},F_{k}\right])\\
=\ &(-1)^{n+1-k}\ori([[F_{0},\ldots,\widehat{F_{i}},\ldots,\widehat{F_{j}},\ldots,F_{k-1}]\oplus\langle x_{k+1},\ldots,x_{n+1}\rangle,F_{k}])\\
=\ &\ori([F_{0},\ldots,\widehat{F_{i}},\ldots,\widehat{F_{j}},\ldots,F_{k-1},F_{k}]\oplus\langle x_{k+1},\ldots,x_{n+1}\rangle)\\
=\ &\ori([F_{0},\ldots,\widehat{F_{i}},\ldots,\widehat{F_{j}},\ldots ,F_{n+1}]),
\end{align*}
where the last equality is our induction hypothesis. For the penultimate equality, in order to permute $\langle x_{k+1},\ldots,x_{n+1}\rangle$ with $F_{k}$, we have used that $x_{k+1}, \ldots, x_{n+1}$ do not belong to the subspaces $[F_{0},\ldots,\widehat{F_{i}},\ldots,\widehat{F_{j}},\ldots,F_{k}]$; in particular, the relevant $d$-component $F_k^d$ of $F_k$ involved in the definition of $[\ldots, F_k]$ on both sides of that equality remains the same.

\smallskip
The two cases with $j\geq k$ are proved almost identically (a
difference is the sign of the factor $(-1)^{n+1-k}$). We have thus
proved the claim and the proposition.
\end{proof}

\begin{proof}[Proof of Theorem~\ref{thm:dE=0}]
Let $F_{0},\ldots,F_{n+1}$ be oriented flags. By Proposition~\ref{prop:Fi->xi} there exists $x_{0},\ldots,x_{n+1}$ such that
\[ \ori([F_{0},\ldots,\widehat{F_{i}},\ldots,\widehat{F_{j}},\ldots,F_{n+1}])=\ori(x_{0},\ldots,\widehat{x_{i}},\ldots,\widehat{x_{j}},\ldots,x_{n+1})\]
for every $0\leq i<j\leq n+1$. In particular, $x_{0},\ldots,x_{n+1}$ is hereditarily spanning and furthermore
\[ \coco(F_{0},\ldots,\widehat{F_{i}},\ldots,F_{n+1})=\pcoc(x_{0},\ldots,\widehat{x_{i}},\ldots,x_{n+1})\]
for every $0\leq i\leq n+1$. The theorem now follows from the validity of the cocycle relation $d \pcoc=0$ for hereditarily spanning $(n+1)$-tuples proven in Proposition~\ref{prop:dPE=0}.
\end{proof}

Finally, we define the map
$$\coc:(\dr(\RR^{n}))^{n+1}\longrightarrow [-1,1], \kern5mm \coc(F_0, \ldots, F_n) = 2^{-n(n+1)} \sum \coco(F'_0, \ldots, F'_n)$$
where the sum ranges over all oriented flags $F'_i$ having $F_i$ as underlying flag.

\begin{cor}\label{cor:coc}
The map $\coc$ is a cocycle ($d\coc=0$ everywhere) and is $\GL_n(\RR)$-equivariant. Moreover, $\coc(F_0, \ldots, F_n)=\pcoc(F_0^1, \ldots, F_n^1)$ as soon as $(F_0^1, \ldots, F_n^1)$ is hereditarily spanning.
\end{cor}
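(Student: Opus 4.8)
The plan is to exploit that $\coc$ is, by construction, the average of $\coco$ over the fibres of the $2^n$-sheeted covering $\odr(\RR^n)\to\dr(\RR^n)$ taken separately in each of the $n+1$ arguments, and to transport each of the three assertions from $\coco$ to $\coc$ by a short bookkeeping argument on lifts. For equivariance, I would use that $\GL_n(\RR)$ acts on $\odr(\RR^n)$ covering its action on $\dr(\RR^n)$, so that for every flag $F$ and every $g\in\GL_n(\RR)$ the element $g$ restricts to a bijection from the fibre over $F$ onto the fibre over $gF$; reindexing the defining sum of $\coc(gF_0,\dots,gF_n)$ along these bijections and invoking the $\GL_n(\RR)$-equivariance of $\coco$ established above yields $\coc(gF_0,\dots,gF_n)=\sig(g)\,\coc(F_0,\dots,F_n)$, the same equivariance that $\coco$ enjoys.

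For $d\coc=0$ I would fix flags $F_0,\dots,F_{n+1}\in\dr(\RR^n)$ and sum the identity $d\coco(F'_0,\dots,F'_{n+1})=0$ of Theorem~\ref{thm:dE=0} over all $2^{n(n+2)}$ tuples of lifts $F'_0,\dots,F'_{n+1}\in\odr(\RR^n)$. In the $i$-th summand $\coco(F'_0,\dots,\widehat{F'_i},\dots,F'_{n+1})$ the oriented flag $F'_i$ does not appear, so summing it over the $2^n$ lifts of $F_i$ only multiplies it by $2^n$ and leaves a fixed multiple of $\coc(F_0,\dots,\widehat{F_i},\dots,F_{n+1})$; carrying the normalising constant $2^{-n(n+1)}$ through, the summed identity becomes $2^{n(n+2)}\,d\coc(F_0,\dots,F_{n+1})=0$, hence $d\coc=0$ everywhere.

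For the final claim, assume $(F_0^1,\dots,F_n^1)$ is hereditarily spanning, so that every $n$-element subfamily of the lines $F_i^1$ is a basis of $\RR^n$. By Remark~\ref{rem:[]}, for any lifts $F'_i$ and every $i$ the subspace $[F'_0,\dots,\widehat{F'_i},\dots,F'_n]$ is the span $\langle(F'_0)^1,\dots,\widehat{(F'_i)^1},\dots,(F'_n)^1\rangle$ with its induced orientation, so $\coco(F'_0,\dots,F'_n)=\pcoc(v_0,\dots,v_n)$ where $v_j$ denotes a positive vector of the oriented line $(F'_j)^1$. By Lemma~\ref{lem:pcoc} the function $\pcoc$ factors through $(\pr(\RR^n))^{n+1}$, so this value equals $\pcoc(F_0^1,\dots,F_n^1)$ regardless of the chosen lifts; averaging this constant over the $2^{n(n+1)}$ lifts of $(F_0,\dots,F_n)$ gives $\coc(F_0,\dots,F_n)=\pcoc(F_0^1,\dots,F_n^1)$.

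The only point that really needs care is the constant-chasing in the second step — checking that averaging over the lift of an omitted argument is the identity operation and that the factor $2^{-n(n+1)}$ in the definition of $\coc$, against the one additional argument carried by a coboundary, reproduces exactly $d\coc$ and not a scalar multiple of it. The remaining steps are direct transcriptions of the properties of $\coco$ and $\pcoc$ already in hand.
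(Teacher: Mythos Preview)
Your proposal is correct and is essentially the same argument as the paper's, which packages the averaging over fibres as a ``deflation'' operator $\mathrm{defl}$ in each degree $p$ (with the degree-dependent normalisation $2^{-n(p+1)}$) and observes once and for all that $d\circ\mathrm{defl}=\mathrm{defl}\circ d$ and that $\mathrm{defl}$ commutes with the $\GL_n(\RR)$-action; your constant-chasing in the second step is precisely the verification of $d\circ\mathrm{defl}=\mathrm{defl}\circ d$ in the relevant degree, and your third step invokes exactly the same ingredients (Remark~\ref{rem:[]} and Lemma~\ref{lem:pcoc}) as the paper does.
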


In other words, denoting by $h: \dr(\RR^n)\to\pr(\RR^n)$ the flagstaff projection $h(F)=F^1$, we have $\coc=h^*\pcoc$ on all $(n+1)$-tuples with hereditarily spanning image in $\pr(\RR^n)^{n+1}$.

\begin{proof}[Proof of Corollary~\ref{cor:coc}]
If $f$ is any function on $(\odr(\RR^{n}))^{p+1}$, $p\geq 0$, we define the deflation $\mathrm{defl}(f)$ on $(\dr(\RR^{n}))^{p+1}$ by the average
$$\mathrm{defl}(f) (F_0, \ldots, F_p) = 2^{-n(p+1)} \sum f(F'_0, \ldots, F'_p)$$
over all oriented representatives $F'_i$ of the flags $F_i$. Thus, $\coc=\mathrm{defl}(\coco)$. The definition ensures $d \circ\mathrm{defl} = \mathrm{defl}\circ d$ and moreover $\mathrm{defl}$ commutes with the diagonal $\PGL_n(\RR)$-actions. As for the additional claim, it follows from the definition of $\coco$, Remark~\ref{rem:[]} and Lemma~\ref{lem:pcoc}.
\end{proof}

\section{Vanishing of coboundaries}\label{sec:coboundaries}

Given a basis $(w_{1},\ldots,w_{n})$ of $\RR^{n}$, define
$F(w_{1},\ldots,w_{n})\in\dr(\RR^{n})$ to be the complete
flag
\[
\{0\}\subset\langle w_{1}\rangle\subset\langle w_{1},w_{2}\rangle\subset\ldots\subset\langle w_{1},\ldots,w_{i}\rangle\subset\ldots\subset\langle w_{1},\ldots,w_{n}\rangle=\RR^{n}.\]
%

\begin{lem}\label{lem:coboundaries}
Let $F_{0},\ldots,F_{n}\in\dr(\RR^{n})$ be the complete flags
\begin{align*}
F_{0} &= F(e_0,e_{1},\ldots,e_{n-1}),\\
F_{1} &= F(e_{1},e_{2},\ldots,e_{n}),\\
F_{i} &= F(e_{i},e_{i+1},\ldots,e_{n},e_0,\ldots,e_{i-1}),\qquad\mbox{for }2\leq i\leq n.
\end{align*}
If a cochain $b:(\dr(\RR^{n})^{n}\rightarrow\RRo$ is $\PGL_n(\ZZ)$-equivariant, then
\[
d b(F_{0},\ldots,F_{n})=0.\]
\end{lem}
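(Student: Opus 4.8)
The plan is to prove that \emph{each} term of $d b(F_{0},\ldots,F_{n})=\sum_{i=0}^{n}(-1)^{i}b(F_{0},\ldots,\widehat{F_{i}},\ldots,F_{n})$ vanishes on its own, by producing, for every $i$, an element of $\PGL_n(\ZZ)$ of determinant $-1$ that fixes each flag $F_{j}$ with $j\neq i$; equivariance into $\RRo$ then forces that term to equal its own negative.

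The seed case is $i=0$. I would take the diagonal matrix $D_{0}=\mathrm{diag}(1,\ldots,1,-1)\in\GL_n(\ZZ)$, which is the identity on the hyperplane $\langle e_{1},\ldots,e_{n-1}\rangle$ and preserves the line $\langle e_{n}\rangle$. The point to check is that $D_{0}$ fixes $F_{j}$ for every $1\leq j\leq n$. Each such $F_{j}$ is obtained from the cyclic sequence $e_{j},e_{j+1},\ldots,e_{n},e_{0},e_{1},\ldots$ of the $e$'s, and one verifies that every subspace occurring in $F_{j}$ is either a coordinate subspace of $\langle e_{j},\ldots,e_{n}\rangle$, or else $\langle e_{j},\ldots,e_{n}\rangle$ together with a subspace of $\langle e_{1},\ldots,e_{j-1}\rangle$ — the occurrences of $e_{0}=e_{1}+\cdots+e_{n}$ contributing, modulo $\langle e_{j},\ldots,e_{n}\rangle$, only vectors of $\langle e_{1},\ldots,e_{j-1}\rangle$. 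Since $D_{0}$ is diagonal it preserves $\langle e_{j},\ldots,e_{n}\rangle$ and all its coordinate subspaces, and since $j-1\leq n-1$ it fixes $\langle e_{1},\ldots,e_{j-1}\rangle$ pointwise; hence it preserves every subspace of $F_{j}$, i.e.\ $D_{0}F_{j}=F_{j}$. (It does \emph{not} fix $F_{0}$, since $D_{0}\langle e_{0}\rangle\neq\langle e_{0}\rangle$ — as it must not, a nontrivial projective transformation cannot fix all $n+1$ flagstaffs.)

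To pass to general $i$ I would exploit the cyclic symmetry of the configuration $F_{0},\ldots,F_{n}$. Let $\sigma\in\GL_n(\ZZ)$ be given by $\sigma e_{j}=e_{j+1}$ for $1\leq j\leq n-1$ and $\sigma e_{n}=-(e_{1}+\cdots+e_{n})=-e_{0}$; then $\sigma e_{0}=-e_{1}$, so on flagstaffs $\sigma$ realizes the $(n+1)$-cycle $[e_{j}]\mapsto[e_{j+1\bmod(n+1)}]$, and because each $F_{j}$ is built from the cyclic sequence of the $e$'s starting at index $j$ one gets $\sigma F_{j}=F_{j+1\bmod(n+1)}$ (reading $F_{n+1}:=F_{0}$). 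Consequently $D_{i}:=\sigma^{i}D_{0}\sigma^{-i}\in\GL_n(\ZZ)$ still has determinant $-1$ and fixes $F_{j}$ for every $j\neq i$.

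With this in hand the lemma follows: $b$ is $\PGL_n(\ZZ)$-equivariant with coefficients in $\RRo$ and $\sig(D_{i})=-1$, so applying $D_{i}$ — which fixes the tuple $(F_{0},\ldots,\widehat{F_{i}},\ldots,F_{n})$ coordinatewise — yields $b(F_{0},\ldots,\widehat{F_{i}},\ldots,F_{n})=-b(F_{0},\ldots,\widehat{F_{i}},\ldots,F_{n})$, hence every term of $d b(F_{0},\ldots,F_{n})$ is $0$. The one genuine computation is the verification in the second paragraph that the single reflection $D_{0}$ stabilizes all of $F_{1},\ldots,F_{n}$ simultaneously; this is exactly why the flags are laid out cyclically and why the redundant last entry in the definition of $F_{i}$ for $i\geq 2$ causes no trouble. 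Everything else is bookkeeping with the $\GL_n(\ZZ)$-action.
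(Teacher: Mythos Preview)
Your proof is correct and follows the same strategy as the paper: for each $i$, exhibit an element of $\GL_n(\ZZ)$ with $\sig=-1$ fixing every $F_j$ with $j\neq i$, so that equivariance kills each summand of $db$ separately. The paper simply writes down these matrices explicitly (distinguishing $i=0$, $i=1$, and $2\leq i\leq n$), while you exploit the cyclic symmetry $\sigma$ permuting the $F_j$ to reduce to the single case $i=0$; in fact your conjugates $\sigma^{i}D_{0}\sigma^{-i}$ are exactly the paper's matrices $g_i$.
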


\begin{proof}
We shall show that for each $i=0,1,\ldots,n$, there exists $g_i\in \GL_n(\ZZ)$ with $\mbox{det}(g_i)=-1$ such that $g_i F_{j}=F_{j}$ for every $j\neq i$. The lemma follows since
\[
b(F_{0},\ldots,\widehat{F_{i}},\ldots,F_{n})=-b(g_i F_{0},\ldots,\widehat{g_i F_{i}},\ldots,g_i F_{n})=-b(F_{0},\ldots,\widehat{F_{i}},\ldots,F_{n})\]
by equivariance. Taking indices modulo $n+1$, the matrix $g_i$ is defined so that it fixes $e_{i+1},\ldots,e_{i-2}$, sends $e_{i-1}$ to $-e_{i-1}$ and maps $e_{i}$ to a linear combination $e_{i} \pm 2 e_{i-1}$ of $e_{i-1}$ and $e_i$. These properties guarantee that $g_i$ fixes the flags $F_{j}$ for $j\neq i$ and has determinant~$-1$. Explicitly, $g_i$ is
\[
\left(\begin{array}{cc}
\mbox{Id}_{n-1} & 0\\
0 & -1\end{array}\right),\ 
\left(\begin{array}{cccc}
-1\\
-2 & 1\\
\vdots &  & \ddots\\
-2 &  &  & 1\end{array}\right),\ 
\left(\begin{array}{cccc}
\mbox{Id}_{i-2}\\
& -1 & 2\\
& 0 & 1\\
&  &  & \mbox{Id}_{n-i}\end{array}\right)\]
for respectively $i=0$, $i=1$ and $2\leq i\leq n$.
\end{proof}

\section{Functoriality and the semi-norm}
\label{sec:funct}
In this section, we compare two ways to define a bounded cohomology class using the cocycle~$\coc$. To keep track of the distinction, we use $q$ to denote the usually tacit map associating an a.e.\ function class to a function. The first way is to consider the cocycle $q\coc$ in the resolution
$$0 \lra \RRo \lra L^\infty(\dr(\RR^n), \RRo) \lra L^\infty(\dr(\RR^n)^2, \RRo)\lra\cdots$$
The cohomology of the (non-augmented) complex of invariants of this resolution is canonically isometrically isomorphic to $\hb^\bu(\GL_n(\RR), \RRo)$ thanks to the amenability of the action on $\dr(\RR^n)$, see e.g.~\cite[Thm.~2]{Burger-Monod3}. (We recall that $\hb^\bu(\GL_n(\RR), \RRo)$ is endowed with a canonical infimal semi-norm~\cite[7.3.1]{Monod}.) Let $[q\coc]\bdd$ be the corresponding element of $\hb^n(\GL_n(\RR), \RRo)$.

The actual value of the semi-norm of $[q\coc]\bdd$ is not obvious since we have no good understanding of coboundaries up to null-sets in the above resolution. Therefore, we use a second approach, considering $\coc$ as a cocycle on the \emph{set} $\dr(\RR^n)$ so that we can use Section~\ref{sec:coboundaries}. Comparing the two approaches, we shall obtain:

\begin{thm}\label{thm:funct}
The semi-norm of $[q\coc]\bdd$ in $\hb^n(\GL_n(\RR), \RRo)$ is $\|[q\coc]\bdd\|=1$.
\end{thm}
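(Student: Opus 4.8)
The plan is to establish the equality $\|[q\coc]\bdd\| = 1$ by proving the two inequalities separately, using the tension between the two descriptions of the class: as an $L^\infty$-cocycle on the amenable space $\dr(\RR^n)$ (which computes the correct semi-norm) and as a genuine everywhere-defined cocycle on the set $\dr(\RR^n)$ (where Section~\ref{sec:coboundaries} applies).

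For the upper bound $\|[q\coc]\bdd\| \leq 1$, I would simply note that $q\coc$ is a cocycle in the first resolution representing $[q\coc]\bdd$, and that $\coc$ takes values in $[-1,1]$ by construction (indeed $\coco$ takes values in $\{-1,1\}$ and $\coc$ is an average of these), so $\|q\coc\|_\infty \leq 1$; hence the infimal semi-norm is at most~$1$. (In fact $\|\pcoc\|_\infty = 1$ on the hereditarily spanning locus, which is conull, so $\|q\coc\|_\infty = 1$ exactly, but only the inequality is needed here.)

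The main work is the lower bound $\|[q\coc]\bdd\| \geq 1$. Here I would argue by contradiction: suppose there were a cocycle $c'$ in the $L^\infty$-resolution, cohomologous to $q\coc$, with $\|c'\|_\infty < 1$. The difficulty is that the coboundary relating $c'$ to $q\coc$ lives only up to null-sets, whereas Lemma~\ref{lem:coboundaries} concerns genuine everywhere-defined equivariant cochains. To bridge this, I would pass to the \emph{set}-theoretic cocycle $\coc$ and use that $\hb^\bu$ is also computed (isometrically, via the infimal seminorm) by a resolution of bounded equivariant functions on the set $\dr(\RR^n)$ without any null-set identifications — or, more carefully, pick a Borel representative and use a section/conull argument to promote the a.e.\ coboundary to an honest $\PGL_n(\ZZ)$-equivariant coboundary after restricting to a suitable orbit. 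Concretely: the specific flags $F_0, \ldots, F_n$ of Lemma~\ref{lem:coboundaries} are defined over $\ZZ$, and the $\PGL_n(\ZZ)$-orbits of the relevant $n$- and $(n+1)$-tuples meet the conull hereditarily-spanning locus; so an a.e.-equality $c' = q\coc + q(db)$ forces, on that countable orbit, an honest equality $c'(\text{orbit of }(F_j)_{j\neq i}) = \coc(\ldots) + db(\ldots)$ for a genuine $\PGL_n(\ZZ)$-equivariant $b$. Then evaluating $d$ on $(F_0, \ldots, F_n)$ and invoking Lemma~\ref{lem:coboundaries} ($db(F_0,\ldots,F_n) = 0$) together with $d\coc = 0$ (Corollary~\ref{cor:coc}) gives $\sum_i (-1)^i c'(F_0, \ldots, \widehat{F_i}, \ldots, F_n) = \sum_i (-1)^i \coc(F_0, \ldots, \widehat{F_i}, \ldots, F_n)$. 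One computes the right-hand side from Corollary~\ref{cor:coc} and the explicit values of $\pcoc$ in Proposition~\ref{prop:dPE=0}: the tuples $(F_0^1, \ldots, \widehat{F_i^1}, \ldots, F_n^1)$ are hereditarily spanning (they are, up to scaling and permutation, configurations of the form $(e_0, e_1, \ldots, e_n)$ with one vector deleted), and $\pcoc$ equals $\pm 1$ there with a definite sign pattern, so the right-hand sum equals $\pm(n+1)$ in absolute value — in any case a sum of $n+1$ terms each of absolute value~$1$ all of the same sign, hence of absolute value $n+1$. But the left-hand side is bounded by $(n+1)\|c'\|_\infty < n+1$, a contradiction. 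Therefore $\|c'\|_\infty \geq 1$ for every representative, i.e.\ $\|[q\coc]\bdd\| \geq 1$.

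The step I expect to be the main obstacle is the null-set bookkeeping in the lower bound: justifying rigorously that an a.e.\ coboundary relation in the $L^\infty$-resolution can be evaluated at the specific $\ZZ$-points $F_0, \ldots, F_n$ and their face tuples, i.e.\ that one may choose everywhere-defined Borel representatives whose coboundary relation holds \emph{honestly} along the countable $\PGL_n(\ZZ)$-orbit in question (so that Lemma~\ref{lem:coboundaries} is applicable). This likely requires either a Fubini argument moving the base tuple by a conull set of $\PGL_n(\ZZ)$-translates, or invoking that the comparison between the $L^\infty$-resolution and the everywhere-defined bounded resolution on $\dr(\RR^n)$ is norm-nonincreasing in the relevant direction. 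Once that technical point is secured, the arithmetic with the values of $\pcoc$ is routine given Proposition~\ref{prop:dPE=0}.
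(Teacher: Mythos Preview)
Your overall strategy is the paper's: pass to a setting where cochains are honest functions (no null-set identifications) so that Lemma~\ref{lem:coboundaries} can be evaluated at the specific tuple $(F_0,\ldots,F_n)$. The paper makes this rigorous by restricting to the lattice $\Gamma=\PGL_n(\ZZ)$: restriction to a lattice is isometric, and for the \emph{discrete} group $\Gamma$ the resolution $\ell^\infty((G/P)^{\bu+1},\RRo)$ of everywhere-defined bounded functions is relatively injective because every point-stabiliser $\Gamma\cap gPg^{-1}$ is amenable (it is discrete and closed in an amenable group). A short functoriality argument (the paper's Lemma~\ref{lem:compare}) then identifies the restriction of $[q\coc]\bdd$ with the class $[i\coc_\Gamma]\bdd$ in the $\ell^\infty$-resolution. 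This is the clean replacement for your ``Fubini argument moving the base tuple'' --- no pointwise evaluation of an $L^\infty$-class is ever attempted.

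There is, however, a genuine degree error in your final computation. The cochains $c'$, $\coc$ and $db$ are all $n$-cochains, i.e.\ functions on $(n{+}1)$-tuples; the expression $c'(F_0,\ldots,\widehat{F_i},\ldots,F_n)$ has only $n$ arguments and is undefined. You should not be summing $c'$ or $\coc$ over faces at all: the cocycle relation $c'=\coc+db$ is simply \emph{evaluated} at the single $(n{+}1)$-tuple $(F_0,\ldots,F_n)$. Lemma~\ref{lem:coboundaries} says $db(F_0,\ldots,F_n)=0$ (here $b$ is an $(n{-}1)$-cochain and the alternating sum occurs inside $db$), while Corollary~\ref{cor:coc} gives $\coc(F_0,\ldots,F_n)=\pcoc(e_0,e_1,\ldots,e_n)=(-1)^{n/2}$. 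Hence every $\Gamma$-equivariant bounded cocycle cohomologous to $\coc$ takes the value $\pm 1$ at $(F_0,\ldots,F_n)$, giving $\|[q\coc]\bdd\|\geq 1$. Your claimed value $\pm(n{+}1)$ and the accompanying ``same sign'' argument are artifacts of this degree confusion and should be discarded.
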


\begin{rem}
We shall deduce the proof from a more general discussion because it might be useful for the study of characteristic classes of other Lie groups. In the special case of $\GL_n(\RR)$, a minor simplification would be available because the stabiliser of a complete flag is amenable as abstract group, whilst in general minimal parabolics are only amenable as topological groups. This accounts for our explicit use of a lattice~$\Gamma$, whilst for $\GL_n(\RR)$ one could instead work over the discrete group $\GL_n(\RR)^\delta$ and only use the existence of a lattice to control indirectly the semi-norm in bounded cohomology for $\GL_n(\RR)^\delta$.
\end{rem}

Let $G$ be a locally compact second countable group, $\Gamma<G$ a lattice and $V$ a coefficient $G$-module (i.e.\ $V$ is the dual of a separable continuous isometric Banach $\Gamma$-module; below, $V=\RRo$). Let $P<G$ be a closed amenable subgroup and endow $G/P$ with its unique $G$-quasi-invariant measure class (see \emph{e.g.}~\cite[\S\,23.8]{Simonnet}). Denote by $\lw(G/P, V)$ the coefficient $G$-module of essentially bounded weak-* measurable function classes; a function $f:G/P\to V$ is weak-* measurable if $u\circ f$ is measurable for any predual vector $u$. Denote by $\mesw(G/P, V)$ the Banach $G$-module of bounded weak-* measurable functions. (Beware that many authors use the notation $\mes$ for \emph{essentially} bounded functions and use the corresponding semi-norm. Of course the two conventions lead to the same quotient $L^\infty$ but the distinction is needed here.) We use further the standard notation $\ell^\infty(G/P, V)$ for the Banach $G$-module of \emph{all} bounded functions $G/P\to V$. All these notations are extended to function (classes) on $(G/P)^{n+1}$, $n\geq 0$. Consider the quotient and inclusion maps
$$q: \mesw \twoheadlongrightarrow \lw, \kern10mm i: \mesw \hooklongrightarrow \ell^\infty.$$
Let now $\omega$ be a cocycle in $\mesw((G/P)^{n+1}, V)^G$. On the one hand, $q\omega$ determines an element $[q\omega]\bdd$ of $\hb^n(G, V)$ whose canonical semi-norm is realized as the infimal $L^\infty$-norm of all cohomologous elements in $\lw((G/P)^{n+1}, V)^G$; this is a special case of~\cite[2.3.2]{Burger-Monod3} or~\cite[7.5.3]{Monod}. On the other hand, we claim that $i\omega$ determines an element $[i\omega_\Gamma]\bdd$ of $\hb^n(\Gamma, V)$ whose canonical semi-norm is realized as the infimal $\ell^\infty$-norm of all cohomologous elements in $\ell^\infty((G/P)^{n+1}, V)^\Gamma$. Indeed, the averaging argument of the above references is stated for locally compact second countable groups with an amenable action on standard measure space; however, in the case of a discrete group, it can be repeated verbatim for any amenable action on any set, since all measurability issues disappear. Therefore, we only need to verify that the $\Gamma$-action on $G/P$ viewed as a set is amenable, which amounts to the amenability of all isotropy groups $\Gamma\cap gPg\inv$ where $g$ ranges over $G$ (this is a degenerate form of Theorem~5.1 in~\cite{Adams-Elliott-Giordano}). The latter group being closed in $gPg\inv$, it is amenable as topological group; being discrete, it is amenable.

\begin{lem}\label{lem:compare}
The image of $[q\omega]\bdd$ under the restriction map $\hb^n(G, V)\to \hb^n(\Gamma, V)$ coincides with $[i\omega_\Gamma]\bdd$.
\end{lem}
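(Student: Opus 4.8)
The plan is to compare the two cohomology classes by factoring both through a common intermediate resolution, namely one built on \emph{bounded} (not merely essentially bounded) weak-* measurable functions, for which the restriction map is realized on the nose by restriction of cochains. The point is that $\omega$ itself lives in $\mesw((G/P)^{n+1},V)^G$, which maps simultaneously into $\lw$ via $q$ and into $\ell^\infty$ via $i$; so the issue is purely that these three resolutions compute $\hb^\bu$ of $G$ (resp.\ $\Gamma$) in potentially different but canonically isomorphic ways, and one must check the isomorphisms are compatible with restriction.

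First I would recall the machinery from \cite{Burger-Monod3, Monod} that any of the complexes $\lw((G/P)^{\bu+1},V)^G$, $\mesw((G/P)^{\bu+1},V)^G$, $\ell^\infty((G/P)^{\bu+1},V)^\Gamma$ is a strong resolution of $V$ by relatively injective modules for the relevant group (amenability of $P$, resp.\ amenability of the $\Gamma$-action on the set $G/P$, gives relative injectivity; the standard contracting-homotopy / Banach-space argument gives strength), hence each computes $\hb^\bu$ of that group with a canonical semi-norm. By the fundamental lemma of homological algebra, any $G$- (resp.\ $\Gamma$-) morphism of such resolutions extending the identity on $V$ induces the canonical isomorphism in cohomology. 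Now $q$ and $i$ are both such morphisms (over $G$, resp.\ over $\Gamma$ after restricting scalars), and the restriction map $\hb^\bu(G,V)\to\hb^\bu(\Gamma,V)$ is itself induced by the identity morphism $\lw((G/P)^{\bu+1},V)^G\hookrightarrow \lw((G/P)^{\bu+1},V)^\Gamma$ of resolutions-over-$\Gamma$ (using that the $G$-resolution, restricted to $\Gamma$, is still a strong resolution by relatively injective $\Gamma$-modules — here one needs that relative injectivity over $G$ passes to $\Gamma$, which holds because $G/P$ as a $\Gamma$-set is amenable).

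With that in place the lemma becomes a diagram chase. Consider the diagram of $\Gamma$-resolutions with $\mesw((G/P)^{\bu+1},V)^\Gamma$ in the middle, mapping by $q$ down-left into $\lw((G/P)^{\bu+1},V)^\Gamma$ and by $i$ down-right into $\ell^\infty((G/P)^{\bu+1},V)^\Gamma$; all three receive the restriction-of-scalars maps from their $G$-invariant sub-resolutions when applicable, and all three compute $\hb^\bu(\Gamma,V)$ via morphisms extending $\mathrm{id}_V$. Since $\omega\in\mesw((G/P)^{n+1},V)^G\subset\mesw((G/P)^{n+1},V)^\Gamma$, the class of $\omega$ in $\hb^n(\Gamma,V)$ computed through the middle resolution maps to $[q\omega_\Gamma]\bdd$ under $q_*$ and to $[i\omega_\Gamma]\bdd$ under $i_*$; but $q_*$ and $i_*$ are both the canonical isomorphism, so $[q\omega_\Gamma]\bdd=[i\omega_\Gamma]\bdd$ in $\hb^n(\Gamma,V)$. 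Finally $[q\omega_\Gamma]\bdd$ is by definition the image of $[q\omega]\bdd\in\hb^n(G,V)$ under restriction, because the inclusion $\lw((G/P)^{\bu+1},V)^G\hookrightarrow \lw((G/P)^{\bu+1},V)^\Gamma$ induces that restriction map and carries the cocycle $q\omega$ to the cocycle $q\omega_\Gamma$. This chains the desired equality.

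The main obstacle I anticipate is not any single deep step but the bookkeeping of relative injectivity and strength when restricting scalars from $G$ to $\Gamma$: one must be careful that the $G$-module $\lw((G/P)^{\bu+1},V)$ remains relatively injective as a $\Gamma$-module, which is exactly where the amenability of $G/P$ as a $\Gamma$-set (established in the paragraph preceding the lemma via \cite{Adams-Elliott-Giordano}) is used, and that the contracting homotopies witnessing strength can be taken $G$-equivariant and hence a fortiori $\Gamma$-equivariant. A secondary point is to make sure the identifications are the metrically canonical ones, but since we only claim an equality of classes (the semi-norm statement of Theorem~\ref{thm:funct} is deduced separately), isometry is not needed here — only that every arrow in the diagram is \emph{the} canonical comparison isomorphism, which the fundamental lemma guarantees once each complex is checked to be a strong resolution by relatively injective modules.
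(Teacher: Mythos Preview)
Your approach is essentially the same as the paper's: use $\mesw((G/P)^{\bu+1},V)$ as a bridge between the $\lw$- and $\ell^\infty$-resolutions, realize the restriction map through the inclusion of $G$-invariants into $\Gamma$-invariants, and invoke the functoriality (``fundamental lemma'') statements from \cite{Monod} to conclude that $q_*$ and $i_*$ land on the same class. The paper cites \cite[8.4.2]{Monod} for the realization of restriction and \cite[7.2.4, 7.2.5]{Monod} for the functoriality, exactly the ingredients you describe.

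One point of care: you assert that $\mesw((G/P)^{\bu+1},V)$ is a resolution by \emph{relatively injective} modules, and that it therefore computes $\hb^\bu(\Gamma,V)$. The paper does not claim this, and it is not needed. What the paper uses is only that the $\mesw$-complex is a \emph{strong} resolution, i.e.\ admits a contracting homotopy (given concretely by evaluation of the first variable at a fixed point of $G/P$). The functoriality statements \cite[7.2.4, 7.2.5]{Monod} then say that any $\Gamma$-morphism from a strong resolution into a relatively injective one extending $\mathrm{id}_V$ is unique up to $\Gamma$-homotopy; applying this to $i$ and to the composition of $q$ with any chain map $\lw\to\ell^\infty$ realizing the canonical isomorphism yields $[q\omega_\Gamma]\bdd=[i\omega_\Gamma]\bdd$ without ever needing to know that the cohomology of $(\mesw)^\Gamma$ itself equals $\hb^\bu(\Gamma,V)$. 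Your diagram chase goes through under this weaker hypothesis; just drop the relative-injectivity claim for $\mesw$ and rephrase the comparison accordingly.
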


\begin{proof}
The restriction can be realized by the inclusion map
$$\mesw((G/P)^{n+1}, V)^G \hooklongrightarrow \mesw((G/P)^{n+1}, V)^\Gamma,$$
see~\cite[8.4.2]{Monod}. Now the lemma follows from the functoriality statements~\cite[7.2.4, 7.2.5]{Monod} applied to the $\Gamma$-resolution $\mesw((G/P)^{n+1}, V)$ in comparison to the two relatively $\Gamma$-injective resolutions $\lw((G/P)^{n+1}, V)$ and $\ell^\infty((G/P)^{n+1}, V)$. These functoriality statements require the existence of a contracting homotopy on the complex $\mesw((G/P)^{n+1}, V)$, which is provided by evaluation of the first variable on any given point.
\end{proof}


\begin{proof}[Proof of Theorem~\ref{thm:funct}]
We apply the above discussion to $G=\PGL_n(\RR)$, $\Gamma=\PGL_n(\ZZ)$ and $V=\RRo$; we let $P$ be the stabiliser of a complete flag (i.e.\ a minimal parabolic) so that $G/P\cong \dr(\RR^n)$. Now $\coc$ is measurable and is a $G$-equivariant cocycle by Corollary~\ref{cor:coc}; in particular Lemma~\ref{lem:compare} holds for $\omega=\coc$. Since the restriction to any lattice preserves the semi-norm~\cite[8.6.2]{Monod}, we conclude
$$\|[q\coc]\bdd\| = \|[i\coc_\Gamma]\bdd\| = \inf \big\| i\coc + d b\big\|_{\ell^\infty},$$
where $b$ ranges over all bounded $\PGL_n(\ZZ)$-equivariant maps $b:(\dr(\RR^{n})^{n}\rightarrow\RRo$. By Lemma~\ref{lem:coboundaries}, the coboundary $db$ vanishes on a specific hereditarily spanning $(n+1)$-tuple and $\coc$ has value $\pm 1$ on those tuples by Corollary~\ref{cor:coc}. Thus $\|[q\coc]\bdd\| =1$. Now that we established this in the bounded cohomology of $\PGL_n(\RR)$, the proposition follows since the quotient map $\GL_n(\RR)\to \PGL_n(\RR)$ induces an isometric isomorphism in bounded cohomology~\cite[8.5.2]{Monod}.
\end{proof}

\section{The bounded Euler class}\label{sec:eub}

We have worked throughout with the module $\RRo$ over the group $\GL_n(\RR)$, whilst the Introduction dealt more classically with the trivial module $\RR$ over $\GL_n^+(\RR)$. Our goal in this section is to reconcile the viewpoints by deducing Theorem~\ref{thm:eub} from the following.

\begin{thm}\label{thm:coho:proj}
$\hb^q(\GL_n(\RR), \RRo)$ is one-dimensional for $q=n$ and vanishes for $q< n$.
\end{thm}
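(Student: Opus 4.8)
The plan is to compute $\hb^\bu(\GL_n(\RR), \RRo)$ using the amenable-action resolution on the flag space $\dr(\RR^n)$, reducing the problem to an algebraic statement about $\GL_n(\RR)$-equivariant measurable function classes on powers of $\dr(\RR^n)$. First I would invoke the isometric isomorphism $\hb^\bu(\GL_n(\RR), \RRo)\cong\hh^\bu\big(L^\infty_{\mathrm{w}*}(\dr(\RR^n)^{\bu+1}, \RRo)^{\GL_n(\RR)}\big)$ coming from amenability of the $\GL_n(\RR)$-action on $\dr(\RR^n)$ (as used in the proof of Theorem~\ref{thm:funct}), so that a class in degree $q$ is an equivariant a.e.\ cocycle modulo equivariant a.e.\ coboundaries. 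The key point is that the projection $\dr(\RR^n)\to\pr(\RR^n)$, $F\mapsto F^1$, lets us pass to the projective space: I would argue that $L^\infty_{\mathrm{w}*}(\pr(\RR^n)^{\bu+1},\RRo)$ is \emph{also} a resolution computing the same bounded cohomology — the stabiliser of a projective point is the stabiliser of a line, which is again amenable (it is contained in a parabolic), so the $\GL_n(\RR)$-action on $\pr(\RR^n)$ is amenable too — and the two resolutions are connected by the equivariant measurable map $h$, which induces an isomorphism on the cohomology of invariants by the fundamental lemma of homological algebra for relatively injective resolutions.

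Having reduced to $\pr(\RR^n)$, the next step uses the ergodic-theoretic rigidity of the $\GL_n(\RR)$-action: since $\GL_n(\RR)$ acts on $\pr(\RR^n)^{q+1}$ with an open dense orbit of full measure precisely when $q+1\le n+1$ (the hereditarily spanning tuples, on which the action of $\PGL_n(\RR)$ is simply transitive for $q+1=n+1$ and transitive with connected amenable — in fact here I only need measure-theoretically trivial — stabiliser for $q+1\le n$), an equivariant a.e.\ function class on $\pr(\RR^n)^{q+1}$ is \emph{determined} by an equivariant measurable function on a single orbit. This is exactly the content I would extract from the proof of Proposition~\ref{prop:unique:pcoc}: for $q\le n$ there is a reflection $g$ with $\sig(g)=-1$ fixing a generic tuple, forcing any equivariant map to $\RRo$ to vanish a.e., hence $L^\infty_{\mathrm{w}*}(\pr(\RR^n)^{q+1},\RRo)^{\GL_n(\RR)}=0$ for $q+1\le n$, so all of $\hb^q$ vanishes for $q<n$; and for $q=n$ the space of equivariant a.e.\ function classes is exactly one-dimensional, spanned by $q\pcoc$. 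It remains to see that this one-dimensional space in degree $n$ survives to cohomology, i.e.\ that $q\pcoc$ is a cocycle and is not a coboundary: it is a cocycle by Proposition~\ref{prop:dPE=0} (the cocycle relation holds on the hereditarily spanning tuples, which are conull), and it is not a coboundary because the space of equivariant a.e.\ cochains in degree $n-1$ is zero, so there is nothing it could be the coboundary of.

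Assembling: $\hb^q(\GL_n(\RR),\RRo)=0$ for $q<n$, while in degree $n$ the space of equivariant cocycles is one-dimensional and the space of equivariant coboundaries is zero, giving $\dim\hb^n(\GL_n(\RR),\RRo)=1$. I expect the main obstacle to be the bookkeeping around the two resolutions: one must check carefully that $L^\infty_{\mathrm{w}*}(\pr(\RR^n)^{\bu+1},\RRo)$ really is a strong relatively injective resolution of $\RRo$ (amenability of line stabilisers, plus the existence of the contracting homotopy given by evaluating the first variable), and that the comparison map induced by $h$ is the identity on $\hb^\bu$ — as well as ensuring that "equivariant a.e.\ function class" genuinely means an honest equivariant function on a conull set, which requires the measurable-selection/ergodicity input that a conull orbit exists and its complement is null (this uses that non-hereditarily-spanning tuples form a proper closed subvariety, hence a null set, as in Example~\ref{ex:spanning}). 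The deduction of Theorem~\ref{thm:eub} from Theorem~\ref{thm:coho:proj} is then a separate, shorter argument (passing from $\RRo$ over $\GL_n(\RR)$ to $\RR$ over $\GL_n^+(\RR)$ via the index-two inclusion and identifying the antisymmetric part), which I would treat afterwards.
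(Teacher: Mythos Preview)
Your argument has a genuine gap: the stabiliser of a projective point in $\pr(\RR^n)$ is \emph{not} amenable once $n\geq 4$. That stabiliser is a maximal parabolic $Q\cong \RR^{n-1}\rtimes(\GL_1(\RR)\times\GL_{n-1}(\RR))$, and its Levi factor contains $\SL_{n-1}(\RR)$, which has non-abelian free subgroups. Your parenthetical ``it is contained in a parabolic'' conflates minimal parabolics (Borel subgroups, which are solvable-by-compact and hence amenable) with arbitrary parabolics. Consequently the $\GL_n(\RR)$-action on $\pr(\RR^n)$ is not amenable, the complex $L^\infty(\pr(\RR^n)^{\bu+1},\RRo)$ is not known to be a relatively injective resolution by the standard criterion, and the comparison with the flag resolution via~$h$ does not give you control over $\hb^\bu$ just from homological algebra.

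The paper's proof addresses exactly this difficulty by invoking a sharper result (Theorem~5 of~\cite{MonodMRL}): the complex on $\pr(\RR^n)$ still computes $\hb^\bu(\GL_n(\RR),\RRo)$ in the relevant range provided three conditions hold. Two are ``mixing'' conditions $(\mathrm{M}_\mathrm{I})$, $(\mathrm{M}_\mathrm{II})$ asserting that certain generic stabilisers contain elements of negative determinant, and the third, $(\mathrm{A})$, asks for amenability not of the action on $\pr(\RR^n)$ but of the diagonal action on the \emph{power} $\pr(\RR^n)^n$, which holds because the generic stabiliser there is conjugate to the diagonal torus. Once those are verified, the computation of invariants you sketch via Proposition~\ref{prop:unique:pcoc} (one-dimensional in degree~$n$, zero below) together with Proposition~\ref{prop:dPE=0} does give the conclusion; but the passage to $\pr(\RR^n)$ requires this extra input, not merely amenability.
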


\begin{proof}[Proof that Theorem~\ref{thm:coho:proj} implies Theorem~\ref{thm:eub}]
Let $G$ be a locally compact group with an index-two closed subgroup $G^+<G$. On the one hand, we have a decomposition of the cohomology $\hhb^\bu(G^+, \RR)$ (bounded or not) as the sum of the symmetric and antisymmetric subspaces as described in the Introduction. On the other hand, there are induction isomorphisms identifying $\hhb^\bu(G^+, \RR)$ with the cohomology of $G$ with values in the module of maps $G/G^+\to \RR$, which itself is simply $\RR\oplus \RRo$ as a $G$-module (where $\sig$ is the unique non-trivial character of $G$ that is trivial on $G^+$). The two decompositions coincide, and moreover the restriction map
$$\hb^\bu(G, \RRo) \lra \hb^\bu(G^+, \RR)^{G/G^+}$$
is an \emph{isometric} isomorphism onto the subspace of antisymmetric classes, see~\cite[8.8.5]{Monod}. It follows that the corresponding restriction map in usual cohomology also preserves the norm. Specialising to our setting, Theorem~\ref{thm:coho:proj} implies that the composed map
$$\hb^n(\GL_n(\RR), \RRo) \lra \hh^n(\GL_n(\RR), \RRo) \lra \hh^n(\GL_n^+(\RR), \RR)$$
is an isometric isomorphism onto the subspace of antisymmetric classes. Thus Theorem~\ref{thm:eub} follows since $\eu$ is antisymmetric and is in the image of bounded cohomology~\cite{Gromov, Ivanov-Turaev}.
\end{proof}

To prove Theorem~\ref{thm:coho:proj}, we will appeal to~\cite{MonodMRL}; for other Lie groups, one would try to use~\cite{MonodVT}.

\begin{proof}[Proof of Theorem~\ref{thm:coho:proj}]
We introduce temporarily the following notation. Let $G$ be $\GL_n(\RR)$, let $Q<G$ be the stabiliser of the projective point corresponding to $e_1$ in $\pr(\RR^n)$ and $N\lhd Q$ be the normal subgroup isomorphic to $\RR^{n-1}\rtimes \{\pm 1\}$ given by all matrices of the form
$$\left(\begin{array}{cc}
\pm 1 & v_2 \ldots v_n\\
0 & \mbox{Id}_{n-1}\end{array}\right) \kern5mm (v_i\in\RR)$$
We identify $G/Q$ with $\pr(\RR^n)$. According to Theorem~5 in~\cite{MonodMRL}, $\hb^\bu(G, \RRo)$ vanishes in degrees~$\leq n-1$ and is realized in all degrees by the complex
$$0\lra L^\infty(\pr(\RR^n), \RRo)^G \lra  L^\infty(\pr(\RR^n)^2, \RRo)^G \lra \cdots$$
provided three conditions $(\mathrm{M}_\mathrm{I})$, $(\mathrm{M}_\mathrm{II})$ and $(\mathrm{A})$ are satisfied (the statement in \emph{loc.\ cit.} does not provide isometric isomorphisms).

\smallskip

Condition~$(\mathrm{M}_\mathrm{I})$ states that the stabiliser in $N$ of a.e.\ point in $\pr(\RR^n)^{n-2}$ has no non-zero invariant vector in $\RRo$ --- which in the case of $\RRo$ just means that this stabiliser should contain an element of negative determinant. The stabiliser in $N$ of any projective point given by a vector $x\neq e_1$ is determined by the equation $\sum_{i=2}^n x_i v_i =  (1- \pm 1) x_1$. Therefore, choosing $-1$ to ensure negative determinant, we see that a generic $(n-2)$-tuple of points in $\pr(\RR^n)$ is stabilised whenever $(v_2, \cdots, v_n)$ is in the intersection of $(n-2)$ affine hyperplanes in $\RR^{n-1}$, whose linear parts are generic. Thus there is a whole affine line of matrices with negative determinant in this stabiliser. This verifies the condition.

\smallskip

Condition~$(\mathrm{M}_\mathrm{II})$ requires that the stabiliser in $G$ of a.e.\ point in $\pr(\RR^n)^n$ has no non-zero invariant vector in $\RRo$. This is so since for any basis of $\RR^n$ the stabiliser of the corresponding projective points is conjugated to the diagonal subgroup.

\smallskip

Condition~$(\mathrm{A})$ demands that the $G$-action on $\pr(\RR^n)^n$ be amenable in Zimmer's sense; this follows from the amenability of the generic stabiliser (which we just identified as a commutative group) in view of the criterion given in~\cite[Theorem~A]{Adams-Elliott-Giordano} and of the fact that the action has locally closed orbits.
\end{proof}

\section{Relation to the simplicial cocycles of Sullivan and Smillie}
\label{sec:ss}
Let us summarize what we established so far. The space $\hb^n(\GL_n(\RR), \RRo)$ is one-dimensional and thus generated by a class $\eub$ which maps isometrically to $\eu$ in $\hh^n(\GL^+_n(\RR), \RR)$ (Section~\ref{sec:eub}). On the other hand, $\hb^n(\GL_n(\RR), \RRo)$ contains the element $[q\coc]\bdd=[q\coco]\bdd$ which has norm one (Section~\ref{sec:funct}). 

Therefore, it remains only to determine the proportionality constant between $\eub$ and $[q\coco]\bdd$; both Theorem~\ref{thm:norm} and Theorem~\ref{thm:two-values} then follow.  We shall do so by describing explicitly in Proposition~\ref{prop:AmapstoSmillie} how $\coco$ relates to the simplicial cocycles constructed by Sullivan~\cite{Sullivan76} and Smillie~\cite{Smillie_unpublished} for the Euler class of a flat $\GL^+_{n}(\RR)$-bundle over a simplicial complex. At the end of the section, we explain the relation with the Ivanov--Turaev cocycle~\cite{Ivanov-Turaev}.

\bigskip

We start by recalling the constructions of Sullivan and Smillie. Let $\xi$ be a flat $\GL^+_{n}(\RR)$-bundle over the geometric realization $|K|$ of a finite simplicial complex $K$. Let $V$ be the corresponding oriented $n$-vector bundle over $|K|$. Since the bundle $V$ is trivial if and only if there exists $n$ linearly independent sections, it is natural to start by finding one non-vanishing section. It is always possible to define such a section $s$ on the $(n-1)$-skeleton of $K$ because $\pi_{i}(\RR^{n}\backslash\{0\})$ is trivial for $i=0,\ldots,n-2$. However, this section may not be extensible to the $n$-skeleton of $K$. Thus, one defines a simplicial $n$-cochain on $K$ by assigning to every oriented $n$-simplex $k$ of $K$ the integer in $\ZZ\cong\pi_{n-1}(\mathbf{S}^{n-1})$ defined as the degree of the map
\[
\mathbf{S}^{n-1}\simeq\partial k\xrightarrow{\ \ s\ \ } \RR^{n} \backslash\{0\}\simeq \mathbf{S}^{n-1},
\]
where we chose an orientation-preserving trivialization $V|_{k}\cong |k|\times \RR^n$. Since the vector bundle $V$ is oriented, this construction is well defined; it yields a cocycle representing the Euler class in $\hh_{\text{simpl}}^{n}(K,\ZZ)$.

Sullivan observed~\cite{Sullivan76} that when the bundle $\xi$ is flat, the section $s$ can be chosen to be affine on each $(n-1)$-simplex of $K$. Thus, the map $\mathbf{S}^{n-1}\rightarrow \mathbf{S}^{n-1}$ can wrap at most once around the origin, so that the resulting cocycle $\ssul(s)$ takes values in $\{-1,0,1\}$.

Smillie later improved Sullivan's bounds as follows~\cite{Smillie_unpublished}: The locally affine section only depends on its values on the vertices $x_{1},\ldots,x_{r}$ of $K$. Choosing non-vanishing vectors $v_{i}$ in the fiber over $x_{i}$ hence defines a section $s=s(v_{1},\ldots,v_{r})$, which in the generic case will be a non-vanishing section on the $(n-1)$-skeleton. One can then form the average
\[
\ssmi(s)=2^{-r} \sum_{\sigma_{i}=\pm 1}\ssul(s(\sigma_{1} v_{1},\ldots,\sigma_{r} v_{r}))
\]
over all sign choices. This improves Sullivan's bound by a factor $2^n$ because the value at any given simplex only depends on the $n+1$ signs of the corresponding vertices, and exactly two of these signs contribute non-trivially.

The cocycles of Sullivan and Smillie also admit the following alternative description. Define a map $\sul: (\RR^{n})^{n+1}\to \RRo$ as follows: $\sul(v_0,\ldots,v_n)$ vanishes if $0$ is not contained in the interior of the convex hull of $v_0,\ldots,v_n$; in particular $\sul$ vanishes on non hereditarily spanning vectors. If $0$ does belong to the interior of the convex hull of $v_0,\ldots,v_n$, then set
$$\sul(v_0,\ldots,v_n)=(-1)^i \ori(v_0,\ldots,\widehat{v_i},\ldots,v_n),$$
where $i$ is arbitrary in $\{0,\ldots,n\}$; since $0$ belongs to the interior of the convex hull, this definition is independent of $i$. Clearly, $\sul$ is $\GL_n(\RR)$-equivariant and alternating. Observe that the evaluation of Sullivan's simplicial cocycle $\ssul(s)$ on an $n$-simplex with vertices $x_{i_0},\ldots,x_{i_n}$ can be rewritten as
$$ \ssul(s)(\langle x_{i_0},\ldots,x_{i_n}\rangle)=\sul(\psi s(x_{i_0}),\ldots,\psi s(x_{i_n})),$$
where $\psi: V|_{\langle x_{i_0},\ldots,x_{i_n}\rangle} \cong \langle x_{i_0},\ldots,x_{i_n}\rangle \times \RR^n \rightarrow \RR^n$ is any (orientation preserving) trivialization over $\langle x_{i_0},\ldots,x_{i_n}\rangle$ followed by the canonical projection.  

The fact that $\ssul(s)$ is a cocycle for $s$ generic --- well known from obstruction theory --- can easily be proved directly under the above identification:

\begin{prop} \label{prop:dsul=0}
Let $v_{0},\ldots,v_{n+1}\in\RR^{n}$ be hereditarily spanning. Then
\[
d \sul(v_{0},\ldots,v_{n+1})=0.\]
\end{prop}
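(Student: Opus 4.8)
The plan is to mimic the strategy used for $\pcoc$ in Proposition~\ref{prop:dPE=0}, exploiting that $\sul$ is $\GL_n(\RR)$-equivariant and alternating to reduce to a normal form, but with a geometric rather than purely combinatorial bookkeeping since $\sul$ is governed by the position of the origin relative to convex hulls. First I would fix hereditarily spanning $v_0,\ldots,v_{n+1}$ and use equivariance to place them in a convenient position; a natural choice is to take $v_1,\ldots,v_n = e_1,\ldots,e_n$ (possibly after a monomial matrix, which is harmless since $\sul$ is alternating and only affects the overall sign consistently through both sides of $d\sul$), leaving two free vectors $v_0$ and $v_{n+1}$ whose coordinates, by hereditary spanning, are all non-zero. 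The quantity $d\sul(v_0,\ldots,v_{n+1}) = \sum_{i=0}^{n+1}(-1)^i \sul(v_0,\ldots,\widehat{v_i},\ldots,v_{n+1})$ then breaks into: the two ``boundary'' terms dropping $v_0$ or $v_{n+1}$, and the $n$ ``interior'' terms dropping one of the $e_j$'s.

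The key computational observation I would establish is a sign-ledger for each summand: $\sul(\ldots)$ is non-zero precisely when the origin lies in the open convex hull of the remaining $n+1$ vectors, and in that case it equals $(-1)^i\ori$ of any $n$ of them. Using the chosen normal form, ``$0$ in the interior of the convex hull of $\{e_1,\ldots,\widehat{e_j},\ldots,e_n, v_0, v_{n+1}\}$'' translates into explicit sign conditions on the coordinates of $v_0$ and $v_{n+1}$ (one recovers the familiar picture that the origin is separated from, or surrounded by, the coordinate simplex faces). I would organize the verification around the sign pattern of the coordinates of $v_0$ and $v_{n+1}$: by a further monomial change putting $v_{n+1}$ (say) into a standard sign vector, one reduces to finitely many combinatorial cases indexed by which coordinates of $v_0$ are positive. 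In each case the non-vanishing summands are identified and their signs computed from the $\ori$-formulas, just as in the proof of Proposition~\ref{prop:dPE=0}; the alternating sum then telescopes to zero, with the parity of $n$ (even) entering exactly as before.

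I expect the main obstacle to be the convex-geometry bookkeeping: unlike $\pcoc$, whose support is the full hereditarily spanning locus, the support of $\sul$ is cut out by the ``origin in the interior of the convex hull'' condition, and one must argue carefully that omitting a variable can both create and destroy this condition, and track exactly which of the $n+2$ faces of the relevant simplex the origin crosses as one deletes vertices. A clean way to sidestep some of this is the soft argument already used for the alternate proof of Proposition~\ref{prop:dPE=0}: $\sul$ is locally constant on the open dense set $H_{n+1}$ of hereditarily spanning tuples (its value only changes when $0$ crosses a hyperplane spanned by $n$ of the vectors, which does not happen within $H_{n+1}$ after omitting variables), so $d\sul$ is locally constant on $H_{n+2}$; hence it suffices to check $d\sul=0$ on a single tuple in each connected component, or—better—to invoke that $\sul$ represents the Euler class (so $d\sul=0$ a.e.) and conclude it vanishes everywhere on $H_{n+2}$ by local constancy. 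I would present the explicit normal-form computation as the primary proof and mention this density argument as the quick alternative.
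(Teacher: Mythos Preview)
Your proposal would work, but the paper's proof takes a different and considerably shorter route. Instead of normalizing and carrying out a case analysis, the paper argues coordinate-free: assuming (after permuting, since $\sul$ is alternating) that $\sul(v_1,\ldots,v_{n+1}) \neq 0$, the origin lies in the interior of $\mathrm{conv}(v_1,\ldots,v_{n+1})$, and hence the open cones $C_i = \{-\sum_{j \neq i} t_j v_j : t_j > 0\}$ for $1 \leq i \leq n+1$ are disjoint with closures covering $\RR^n$. Hereditary spanning forces $v_0$ off the cone boundaries, so $v_0$ lies in a unique $C_j$; one checks directly that $v_0 \in C_i$ is equivalent to $\sul(v_0,v_1,\ldots,\widehat{v_i},\ldots,v_{n+1}) \neq 0$. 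Thus only the $i=0$ and $i=j$ terms of $d\sul$ survive, and a single orientation identity shows they cancel. This exploits the convex geometry of $\sul$ directly and replaces your sign-ledger by a two-term cancellation with no case distinction at all.

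Two small points about your sketch. First, you can (and should) normalize $n+1$ vectors rather than $n$, say $v_0,\ldots,v_n = e_0,\ldots,e_n$, leaving only $x = v_{n+1}$ free; even then the condition ``$0$ in the interior of $\mathrm{conv}(e_0,\ldots,\widehat{e_j},\ldots,e_n,x)$'' depends on the \emph{ordering} of the coordinates of $x$, not merely their signs, so the case analysis is indexed by permutations as well as sign vectors---just as in the explicit proof of Proposition~\ref{prop:dPE=0}. Second, your soft alternative is sound ($\sul$ is indeed locally constant on $H_{n+1}$, since the coefficients of the unique linear relation among $v_0,\ldots,v_n$ are nowhere zero there), but invoking ``$\sul$ represents the Euler class, hence $d\sul = 0$ a.e.'' presupposes the cocycle property you are proving; you would have to import it from obstruction theory, which the paper itself flags as the external source just before stating the proposition.
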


\begin{proof}
We can assume that there is some $i$ with $\sul(v_{0},\ldots,\widehat{v_{i}},\ldots,v_{n+1})\neq0$. Since $\sul$ is alternating, so is $d\sul$ and we can without loss of generality suppose that $\sul(v_{1},\ldots,v_{n+1})\neq0$. Define cones $C_{i}$ in $\RR^{n}$ by
\[
C_{i}=\Bigg\{ -\sum_{\substack{j=1\\j\neq i}}^{n+1}t_{j} v_{j} : \text{ }t_{j}>0\Bigg\}. \kern10mm (1\leq i \leq n+1)
\]
Since $\sul(v_1,\ldots,v_{n+1})\neq 0$, the cones $C_i$ are open, disjoint, and their closures cover $\RR^n$. Indeed, the affine simplex with vertices $-v_{1},\ldots,-v_{n+1}$ (also) contains $0$ and $C_{i}$ is the open cone defined by the face with vertices $-v_{1}\ldots,\widehat{-v_{i}},\ldots,-v_{n+1}$.

It now remains to see where the point $v_{0}\in\RR^{n}$ belongs to. We first show that $v_0$ does not belong to the boundary of any of the $C_i$'s. Indeed, if this were the case, then $v_0$ would be a linear combination of strictly less than $n$ of the vectors $v_1,\ldots,v_{n+1}$, which would contradict the assumption that the vectors are hereditarily spanning. Thus, there exists a unique $j\in \{1,\ldots,n+1\}$ such that $v_0\in C_j$. Observe that
$$ v_0\in C_i \Longleftrightarrow \sul(v_0,v_1,\ldots,\widehat{v_i},\ldots,v_{n+1})\neq 0,$$
so that the cocycle relation simplifies to
\begin{align*}
&d \sul(v_{0},\ldots,v_{n+1}) =\\
=\ &\sul(v_{1},\ldots,v_{n+1})+(-1)^{j}\sul(v_{0} ,\ldots,\widehat{v_{j}},\ldots,v_{n+1})\\
=\ &(-1)^{j-1}\ori(v_1,\ldots,\widehat{v_{j}},\ldots,v_{n+1})+(-1)^j\ori(v_1,\ldots,\widehat{v_{j}},\ldots,v_{n+1})=0.
\end{align*}
\end{proof}

Smillie's improvement~\cite{Smillie_unpublished} on the Milnor--Sullivan bounds suggests to consider the average of $\sul$ over all possible sign changes. It is straightforward to check that the resulting map descends to the projective space and retains the other desirable properties:

\begin{lem}\label{lem:Smillie}
The map $\smi: (\pr(\RR^{n}))^{n+1}\to \RRo$ defined by
$$\smi(v_0, \ldots, v_n) = 2^{-(n+1)} \sum_{\sigma_i=\pm 1} \sul(\sigma_0 v_0, \ldots, \sigma_n v_n)$$
is $\GL_n(\RR)$-equivariant and its coboundary vanishes on hereditarily spanning $(n+2)$-tuples.\qed
\end{lem}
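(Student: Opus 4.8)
The plan is to deduce all three assertions from the corresponding properties of $\sul$ already established above, together with the parity of $n$. First, the $\GL_n(\RR)$-equivariance is immediate: $\sul$ is $\GL_n(\RR)$-equivariant by inspection, and averaging over the sign choices $\sigma_i\in\{-1,1\}$ commutes with the $G$-action since $g\cdot(\sigma_i v_i)=\sigma_i(g v_i)$ and since $\sul$ transforms by the character $\sig$ uniformly in all arguments; thus $\smi(g v_0,\ldots,g v_n)=\sig(g)\,\smi(v_0,\ldots,v_n)$, i.e.\ $\smi$ is $G$-equivariant as a map into $\RRo$. Second, I would check that $\smi$ actually descends to $(\pr(\RR^n))^{n+1}$: replacing $v_i$ by $\lambda v_i$ with $\lambda\in\RR^*$ permutes the terms of the averaging sum (the sign $\sig(\lambda)$ can be absorbed into the summation index $\sigma_i$) and multiplies each $\sul$-value by $\sig(\lambda)$, but since $n$ is even the number of arguments is odd, so one has to be slightly careful: one uses that $\sul(-v_0,v_1,\ldots,v_n)=\sul(v_0,v_1,\ldots,v_n)$ is \emph{false} in general, yet the \emph{sum} over all sign patterns is visibly invariant under flipping any single $v_i$ to $-v_i$, because that flip merely reindexes the sum. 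Hence $\smi$ is well defined on projective tuples.

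Third, and this is the only point requiring the earlier work rather than bookkeeping, I would establish that $d\smi$ vanishes on hereditarily spanning $(n+2)$-tuples. The key observation is that the property of being hereditarily spanning depends only on the projective points, and is unaffected by replacing any $v_i$ by $\sigma_i v_i$; so if $(v_0,\ldots,v_{n+1})$ is hereditarily spanning, then so is $(\sigma_0 v_0,\ldots,\sigma_{n+1}v_{n+1})$ for every choice of signs. Now $d$ commutes with the averaging operator $v\mapsto 2^{-(n+1)}\sum_{\sigma}\sul(\sigma_0 v_0,\ldots)$ in the same way $d$ commutes with $\mathrm{defl}$ in the proof of Corollary~\ref{cor:coc} — indeed the averaging is a finite-group average over $\{\pm1\}^{n+2}$ acting on the arguments, and the differential $d=\sum(-1)^i d_i$ treats each argument symmetrically. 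Therefore $d\smi(v_0,\ldots,v_{n+1})$ is, up to the normalizing constant, the average of $d\sul(\sigma_0 v_0,\ldots,\sigma_{n+1}v_{n+1})$ over all sign patterns, and each summand is $0$ by Proposition~\ref{prop:dsul=0} since every sign-twisted tuple remains hereditarily spanning. Hence $d\smi$ vanishes on hereditarily spanning $(n+2)$-tuples.

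The main obstacle — really the only subtlety — is the descent to projective space in the second step, where the odd number $n+1$ of arguments means one cannot naively say "$\sul$ is even in each slot"; the resolution is to argue at the level of the full sign-sum rather than term by term, exactly as sketched above. Everything else (equivariance, commutation of $d$ with averaging) is formal and parallels arguments already given for $\pcoc$, $\coco$ and $\coc$. I would therefore present the proof as a short paragraph recording these three points, which is consistent with the \qed placed directly after the statement in the excerpt.
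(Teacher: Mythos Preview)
Your proposal is correct and is precisely the straightforward verification the paper has in mind; indeed the paper gives no proof at all beyond the sentence ``It is straightforward to check that the resulting map descends to the projective space and retains the other desirable properties'' and the \qed. One small inaccuracy: in your descent argument you write that scaling $v_j$ by $\lambda$ ``multiplies each $\sul$-value by $\sig(\lambda)$'', which is not quite right---rather, $\sul$ is invariant under positive scaling of any argument (both the convex-hull condition and the orientation are), so $\sul(\ldots,\lambda v_j,\ldots)=\sul(\ldots,\sig(\lambda)v_j,\ldots)$ and the sign $\sig(\lambda)$ is absorbed into the summation index $\sigma_j$ exactly as you then say; your conclusion is correct even though the intermediate phrasing is off.
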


It is also easy to check that for $v_i=e_i$ the only non-zero summands are $\sul(-e_0, e_1, \ldots, e_n)= 1$ and $\sul(e_0, -e_1, \ldots, -e_n)=1$. Therefore, recalling that $\pcoc(e_{0},e_{1},\ldots,e_{n}) =(-1)^{n/2}$, Lemma~\ref{lem:Smillie} and Proposition~\ref{prop:unique:pcoc} imply:

\begin{cor}\label{cor:Smillie}
We have
$$\pcoc(v_0, \ldots, v_n) = (-1)^{n/2}\, 2^n\, \smi(v_0, \ldots, v_n) $$
for all $v_i\in\RR^n$.\qed
\end{cor}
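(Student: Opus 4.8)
The plan is to recognize the identity as an immediate consequence of the rigidity established in Proposition~\ref{prop:unique:pcoc} together with a single evaluation. First I would note that, by Lemma~\ref{lem:pcoc} and Lemma~\ref{lem:Smillie}, both $\pcoc$ and $\smi$ are $\GL_n(\RR)$-equivariant maps $(\pr(\RR^n))^{n+1}\to\RRo$. Proposition~\ref{prop:unique:pcoc} asserts that the space of such maps is at most one-dimensional; since $\pcoc$ is non-zero, it spans this space, and hence $\smi = c\,\pcoc$ for a unique scalar $c\in\RR$.

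To compute $c$, I would evaluate both sides at the hereditarily spanning $(n+1)$-tuple $(e_0,e_1,\ldots,e_n)$. On the one hand, $\pcoc(e_0,e_1,\ldots,e_n)=(-1)^{n/2}$, as recorded in the proof of Proposition~\ref{prop:dPE=0}. On the other hand, among the $2^{n+1}$ summands of $\smi(e_0,\ldots,e_n)=2^{-(n+1)}\sum_{\sigma_i=\pm1}\sul(\sigma_0 e_0,\ldots,\sigma_n e_n)$, the only non-vanishing ones are $\sul(-e_0,e_1,\ldots,e_n)=1$ and $\sul(e_0,-e_1,\ldots,-e_n)=1$ (for the other sign patterns, $0$ fails to lie in the interior of the convex hull of the $\sigma_i e_i$), so $\smi(e_0,\ldots,e_n)=2^{-(n+1)}\cdot 2=2^{-n}$. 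Comparing, $(-1)^{n/2}=c\cdot 2^{-n}$, whence $c=(-1)^{n/2}2^{n}$, i.e.\ $\pcoc=(-1)^{n/2}2^{n}\smi$ as maps on $(\pr(\RR^n))^{n+1}$.

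It remains only to upgrade this to an identity of maps on $(\RR^n)^{n+1}$. Both $\pcoc$ and $\smi$, regarded as functions of linear representatives, are pulled back from their projective counterparts on tuples of non-zero vectors; and both vanish as soon as one argument is the zero vector (for $\pcoc$ because every factor $\ori(\ldots)$ then involves a degenerate tuple, and for $\sul$, hence $\smi$, because $0$ then lies on the boundary rather than in the interior of the convex hull). The projective identity therefore propagates to all of $(\RR^n)^{n+1}$. There is no real obstacle here: the only point requiring any care is the applicability of Proposition~\ref{prop:unique:pcoc}, which is already secured once one knows, from Lemmas~\ref{lem:pcoc} and~\ref{lem:Smillie}, that both maps are equivariant and well defined on the projective space.
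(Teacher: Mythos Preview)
Your argument is correct and is exactly the paper's approach: invoke the uniqueness of Proposition~\ref{prop:unique:pcoc} (via Lemmas~\ref{lem:pcoc} and~\ref{lem:Smillie}) and then pin down the constant by evaluating both sides at $(e_0,e_1,\ldots,e_n)$. There is a harmless notational slip---you write $\smi=c\,\pcoc$ but then solve $(-1)^{n/2}=c\cdot 2^{-n}$ as if $\pcoc=c\,\smi$---though your final conclusion $\pcoc=(-1)^{n/2}2^n\smi$ is correct.
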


\noindent
(\emph{Nota bene}: Lemma~\ref{lem:Smillie} and Corollary~\ref{cor:Smillie} give a third proof of Proposition~\ref{prop:dPE=0}.)

\medskip

At this point it is apparent that we are ready to exhibit a proportionality relation between the class $[q\pcoc]$ in ordinary (continuous) cohomology defined be the $L^\infty$-cocycle $q\pcoc = q\coc = q\coco$ and the Euler class of flat bundles:

\begin{prop}\label{prop:AmapstoSmillie}
Let $V$ be a flat oriented $n$-vector bundle over a finite simplicial complex $K$ induced by a representation $\pi_1(|K|)\rightarrow \GL^+_n(\RR)$. Then the resulting map
$$\hh^n(\GL^+_n(\RR), \RR)\longrightarrow \hh^n_\mathrm{simpl}(K, \RR)$$
sends $(-1)^{n/2}2^{-n} [q\pcoc]$ to the (real) Euler class $\eu (V)$ of the bundle $V$. 
\end{prop}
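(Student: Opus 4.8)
The plan is to translate the statement into a comparison of explicit cocycles at the cochain level, using the simplicial cocycles of Sullivan and Smillie as the bridge between the group-cohomological side and the topological Euler class. Since we already know from Corollary~\ref{cor:Smillie} that $\pcoc = (-1)^{n/2}\,2^n\,\smi$ pointwise on $(\RR^n)^{n+1}$, the class $(-1)^{n/2}2^{-n}[q\pcoc]$ equals $2^0\cdot[q\smi]=[q\smi]$ up to the $L^\infty$-class of $\smi$. So it suffices to show that $\smi$ (viewed as a continuous $\GL_n^+(\RR)$-cocycle on the projective space, restricted from $\RRo$ to the trivial module after passing to $\GL_n^+(\RR)$) maps to $\eu(V)$ under the classifying-space pull-back.

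First I would recall the standard description of the van Est / bar-resolution map $\hh^n(\GL_n^+(\RR),\RR)\to \hh^n_{\mathrm{simpl}}(K,\RR)$ induced by a holonomy representation $\rho:\pi_1(|K|)\to\GL_n^+(\RR)$: one lifts the simplicial structure to the universal cover $|\widetilde K|$, chooses a fundamental domain and the corresponding $\pi_1$-equivariant system of vertices, and then evaluates the group cocycle on tuples of "frame data'' attached to the vertices of each $n$-simplex. Concretely, if $\tilde x_0,\ldots,\tilde x_n$ are the lifted vertices of an $n$-simplex and one picks a non-vanishing vector $v_i$ in the fibre over $\tilde x_i$ (equivariantly, so $v_{\gamma x}=\rho(\gamma)v_x$), then pulling back $q\smi$ assigns to that simplex the number $\smi(v_0,\ldots,v_n)$, computed in a fixed orientation-preserving trivialization over the simplex. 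Because $\smi$ descends to $\pr(\RR^n)$ and is independent of the choice of representatives $v_i$ up to scaling (including sign), the formula is exactly Smillie's averaged cochain: from the alternative description of $\ssul(s)$ via $\sul(\psi s(x_{i_0}),\ldots,\psi s(x_{i_n}))$ recalled in the excerpt, averaging over all sign choices of the $v_i$ gives precisely $2^{-(n+1)}\sum_{\sigma}\sul(\sigma_0 v_0,\ldots,\sigma_n v_n)=\smi(v_0,\ldots,v_n)$, which is the restriction of $\ssmi(s)$ to that simplex (the signs at vertices outside the simplex average out trivially in $\ssmi(s)=2^{-r}\sum_\sigma\ssul(s(\sigma\cdot))$ and do not affect the value on a fixed simplex). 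Hence the pull-back of $[q\smi]$ is the class of Smillie's cocycle $\ssmi(s)$, which by construction (obstruction theory, Proposition~\ref{prop:dsul=0} ensuring it is a cocycle for generic $s$) represents $\eu(V)$ in $\hh^n_{\mathrm{simpl}}(K,\RR)$.

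Combining: the pull-back of $[q\pcoc]=(-1)^{n/2}2^n[q\smi]$ is $(-1)^{n/2}2^n\,\eu(V)$, so the pull-back of $(-1)^{n/2}2^{-n}[q\pcoc]$ is $(-1)^{n/2}2^{-n}\cdot(-1)^{n/2}2^n\,\eu(V)=\eu(V)$, as asserted. The one point requiring care is the comparison-map bookkeeping in the first step, namely checking that the abstractly-defined map $\hh^n(\GL_n^+(\RR),\RR)\to\hh^n_{\mathrm{simpl}}(K,\RR)$ is indeed computed by this vertex-evaluation recipe on the bar-resolution — this is classical but one should pin down the normalization (choice of fundamental domain, equivariant vertex lifts, orientation-preserving trivialization) so that no stray sign or factor of two is introduced; I expect this reconciliation of conventions, rather than any genuine computation, to be the main obstacle, and it is dispatched by appealing to the standard dictionary between group cohomology of the holonomy and the pull-back through $|K|\to B\GL_n^+(\RR)^\delta$ together with the explicit formula for $\ssul(s)$ already recorded above.
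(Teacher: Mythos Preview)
Your proposal is correct and follows essentially the same strategy as the paper: reduce via Corollary~\ref{cor:Smillie} to showing that the pull-back of the Smillie-type cocycle on projective space coincides at the cochain level with Smillie's simplicial cocycle $\ssmi(s)$, which is known from obstruction theory to represent $\eu(V)$. The only difference is packaging: where you describe the comparison map through the universal cover and $\pi_1$-equivariant vertex data, the paper writes it out via local trivializations $\varphi_x$ over star neighbourhoods and the associated transition functions $g_{xy}$, then chooses the $\varphi_x$ so that $\varphi_x(s(x))=(x,v)$ for a fixed $v$, which makes the identification $\ssmi(s)(\langle x_0,\ldots,x_n\rangle)=\smi(v,g_{01}v,\ldots,g_{0n}v)$ immediate. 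The paper also spends a sentence noting that one should work in the Borel resolution (using the genuine cocycle $\coco$ rather than the a.e.\ cocycle $\pcoc$) to make the cochain map well-defined, though it then observes that only generic evaluations occur anyway; you implicitly handle this by restricting to generic sections $s$.
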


Furthermore, we will explain in the proof how this map can be realized on cochains to yield $\ssmi(s)$ for an appropriate locally affine section $s$. At the singular level, this means that for any generic affine section, we can find a classifying map $|K|\rightarrow B\GL^+_{n}(\RR)^{\delta}$ for the flat bundle $V$ over $|K|$ such that $\coco$ maps to  $(-1)^{n/2}2^n \ssmi$ by pull-back.

\begin{cor}\label{cor:AmapstoSmillie}
The cocycle $q\pcoc$ represents $(-1)^{n/2}2^{n}$ times the bounded Euler class $\eub$ in $\hb^n(\GL_n(\RR), \RRo)$. Therefore, $[q\pcoc] = (-1)^{n/2}2^{n}\eu$ in $\hh^n(\GL_n^+(\RR), \RR)$
\end{cor}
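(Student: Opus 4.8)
The plan is to exploit that $\hb^n(\GL_n(\RR),\RRo)$ has already been identified as one-dimensional, so that only a single proportionality constant remains to be pinned down. First I would record, using Corollary~\ref{cor:coc} and the identifications of Section~\ref{sec:ss}, that $q\pcoc$, $q\coc$ and $q\coco$ all represent one and the same class in $\hb^n(\GL_n(\RR),\RRo)$, and that this class is non-zero since $\|[q\coco]\bdd\|=1$ by Theorem~\ref{thm:funct}. As $\hb^n(\GL_n(\RR),\RRo)$ is one-dimensional by Theorem~\ref{thm:coho:proj} and is generated by $\eub$, there is a unique $c\in\RR^*$ with $[q\pcoc]\bdd=c\,\eub$; everything reduces to showing $c=(-1)^{n/2}2^{n}$.

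Next I would pass to ordinary cohomology. By the argument deriving Theorem~\ref{thm:eub} from Theorem~\ref{thm:coho:proj}, the composite $\hb^n(\GL_n(\RR),\RRo)\to\hh^n(\GL_n(\RR),\RRo)\to\hh^n(\GL_n^+(\RR),\RR)$ is injective and carries $\eub$ to the Euler class $\eu$; applying it to $[q\pcoc]\bdd=c\,\eub$ gives $[q\pcoc]=c\,\eu$ in $\hh^n(\GL_n^+(\RR),\RR)$. To determine $c$ I would now invoke Proposition~\ref{prop:AmapstoSmillie}. Fix a flat oriented $n$-vector bundle $V$ over a finite simplicial complex $K$ with $\eu(V)\neq 0$ in $\hh^n_\mathrm{simpl}(K,\RR)$; such a bundle exists classically, e.g.\ the external Whitney sum of $n/2$ copies of the (flat, by Milnor) tangent bundle of a closed surface of genus $g\geq 2$, which is a flat $\GL_n^+(\RR)$-bundle of Euler number $(2-2g)^{n/2}\neq 0$ (see~\cite{Milnor58}). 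The map of Proposition~\ref{prop:AmapstoSmillie} sends the universal Euler class $\eu$ to $\eu(V)$ and, by that same proposition, also sends $(-1)^{n/2}2^{-n}[q\pcoc]$ to $\eu(V)$. Since $[q\pcoc]=c\,\eu$, comparing the images gives $c\,\eu(V)=(-1)^{n/2}2^{n}\,\eu(V)$ in the real vector space $\hh^n_\mathrm{simpl}(K,\RR)$, and as $\eu(V)\neq 0$ we conclude $c=(-1)^{n/2}2^{n}$. This is the first assertion, and the second, $[q\pcoc]=(-1)^{n/2}2^{n}\eu$ in $\hh^n(\GL_n^+(\RR),\RR)$, follows by pushing $[q\pcoc]\bdd=(-1)^{n/2}2^{n}\eub$ forward through the same injective comparison map.

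Once Proposition~\ref{prop:AmapstoSmillie} is available the computation is essentially bookkeeping; the one step deserving genuine care — and the only input one cannot simply write down — is the existence of a flat bundle with non-vanishing Euler number, which is what makes the comparison of the two images conclusive rather than vacuous. I would also be careful to keep ``$[q\pcoc]$'' denoting the image of $[q\pcoc]\bdd$ throughout, and to use precisely the normalization of $\eub$ fixed by Theorem~\ref{thm:eub}, so that the $\eu$ appearing on the right is the genuine (unnormalized) Euler class; with those conventions fixed, no further obstacle arises.
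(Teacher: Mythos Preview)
Your proposal is correct and follows essentially the same approach as the paper: use one-dimensionality to reduce to determining a single proportionality constant, then invoke Proposition~\ref{prop:AmapstoSmillie} together with the existence (via Milnor) of a flat bundle with non-vanishing Euler class, realized as a product of $n/2$ flat surface bundles. The only minor difference is that the paper argues directly with one-dimensionality of $\hh^n(\GL_n(\RR),\RRo)$, whereas you route through $\hb^n$ and the comparison map (and your appeal to Theorem~\ref{thm:funct} to see $[q\pcoc]\bdd\neq 0$ is harmless but unnecessary, since non-vanishing already follows once $\eu(V)\neq 0$).
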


\begin{proof}
Since $\hh^n(\GL_n(\RR),\RRo)$ is one-dimensional, it suffices in view of Proposition~\ref{prop:AmapstoSmillie} to find one flat bundle over some $n$-dimensional finite simplicial complex with non-trivial Euler class. For this, take a product of $n/2$ copies of such a $2$-dimensional flat bundle over a surface of genus $g$. Such $2$-dimensional flat bundles were exhibited by Milnor in~\cite{Milnor58}.
\end{proof}

\begin{proof}[Proof of Theorem~\ref{thm:two-values}]
Since $q\pcoc =q \coc = q \coco$, Theorem~\ref{thm:two-values} follows from Theorem~\ref{thm:dE=0} and Corollary~\ref{cor:AmapstoSmillie}.
\end{proof}

\begin{proof}[Proof of Theorem~\ref{thm:norm}]
We apply successively Theorem~\ref{thm:eub}, Corollary~\ref{cor:AmapstoSmillie} and Theorem~\ref{thm:funct}:
$$\|\eu\| =\|\eub\| = 2^{-n} \|[q\pcoc]\|\bdd = 2^{-n} \|[q\coc]\|\bdd = 2^{-n}.$$ 
\end{proof}

\begin{proof}[Proof of Proposition~\ref{prop:AmapstoSmillie}]
It was convenient to use a.e.\ cocycles since it allows to define the class $[q\pcoc]=[q\coco]$ with the much simpler function $\pcoc$. However, in order to implement explicit cochains maps, we shall need a true cocycle (this reflects the fact that the map in the statement factors through $\GL_n^+(\RR)^\delta$). Therefore, we realize $\hh^\bu(\GL_n^+(\RR), \RR)$ using the resolution $\bor(\GL_n^+(\RR)^{\bu+1})$ by Borel maps, see~\cite{Wigner73}. The cocycles $\coc$ and $\coco$ thus represent classes $[\coc], [\coco]$ which coincide with $[q \pcoc]$ in $\hh^n(\GL_n^+(\RR), \RR)$ (this follows e.g. since the inclusion of continuous cochains into a.e.\ cochains factors through $\bor$ and induces isomorphisms; as it turns out, we will evaluate $\coco$ at generic points only anyway).

Next, we describe on the cochain level how a representation $\ro: \pi_1(|K|)\to \GL^+_n(\RR)$ induces a map $\ro^*:\hh^\bu(\GL^+_n(\RR), \RR)\to \hh^\bu_\mathrm{simpl}(K, \RR)$; this amounts to an explicit implementation of the classifying map. Given a vertex $x$ of $K$, let $U_x$ be a neighbourhood of the closure of the star at $x$, small enough so that $U_x$ is contractible. Recall that the star at $x$ is the union of all the open simplices having $x$ as a vertex, so that $U_x$ contains all the closures of these simplices. Let
$$\varphi_x:V|_{U_x} \longrightarrow U_x \times \RR^n$$
be any trivialization of the flat bundle $V$ over $U_x$ and, for $x,y\in K^0$, denote by $g_{xy}:U_x\cap U_y\rightarrow \GL_n^+(\RR)$ the corresponding transition functions given by 
$$\varphi_x \varphi_y^{-1} (z,v)= (z,g_{xy}(z)v),$$
for $z\in U_x\cap U_y$ and $v\in \RR^n$. Then $\ro^*$ is induced at the cochain level by the map
$$\ro^*_\varphi: \bor(\GL_n^+(\RR)^{\bu+1})^{\GL_n^+(\RR)} \lra C^\bu_\mathrm{simpl}(K)$$
that sends a $\GL_n^+(\RR)$-invariant cochain $D$ to the simplicial cochain whose value on a simplex with vertices $x_0,\ldots,x_q$ is
$$\ro^*_\varphi(D)(\langle x_0,\ldots,x_q \rangle) = D(g_{i0},\ldots,g_{ii},\ldots,g_{iq}),$$
where $g_{ij}\in \GL_n^+(\RR)$ is the value of the transition function $g_{x_ix_j}$ on the connected component of $U_{x_i}\cap U_{x_j}$ containing $\langle x_0,\ldots,x_q \rangle$. In view of the cocycle relations of the transition functions and the fact that $D$ is $\GL_n^+(\RR)$-invariant, the definition does not depend on $i$. 

Returning to Smillie's cocycle, choose $s(x)\in V|_{\{x\}}$ for every vertex $x$ so that the resulting locally affine section is nowhere vanishing on the $(n-1)$-skeleton. Pick $0\neq v\in \RR^n$ and choose trivializations 
$$\varphi_x:V|_{U_x} \longrightarrow U_x \times \RR^n$$
such that 
$$\varphi_x(s(x))=(x,v).$$
Such trivializations are obtainable by composing, over every $U_x$, any given trivialization with an appropriate transformation of $\GL_n^+(\RR)$. Smillie's cocycle is given by 
$$\ssmi(s)(\langle x_0,\ldots,x_q \rangle)=\smi(\psi s(x_0),\ldots,\psi s(x_n)), \eqno{(*)}$$
where $\psi: V|_{ \langle x_0,\ldots,x_q \rangle} \rightarrow \langle x_0,\ldots,x_q \rangle \times \RR^n \rightarrow \RR^n$ is given by any trivialization of $V|_{\langle x_0,\ldots,x_q \rangle}$, in particular by the restriction of $\varphi_{x_0}$ to $\langle x_0,\ldots,x_q \rangle$, so that~$(*)$ rewrites as
$$\smi (v, g_{01}v,\ldots,g_{0n}v)$$
which in turn is by definition equal to 
$$\ro^*_\varphi\Big( (-1)^{n/2}2^{-n} \coco \Big)(\langle x_0,\ldots,x_q \rangle).$$ 
This finishes the proof of the proposition.
\end{proof}

Finally, we comment on the relation with the cocycle constructed by Ivanov--Turaev in~\cite{Ivanov-Turaev}. Expressed in the homogeneous bar resolution, the Ivanov--Turaev cocycle becomes the following map:
$$\itu(g_0, \ldots, g_n) = \int_{B^{n+1}} \sul(g_0 v_0, \ldots, g_n v_n)\, d v_0\ldots d v_n, \kern10mm(g_i\in\GL_n(\RR))$$
where $B$ is the unit ball in $\RR^n$ with normalised measure. In fact they considered $\GL_n^+(\RR)$ but this is equivalent since these classes are antisymmetric.

\begin{prop}\label{prop:IT}
The class $[\itu]\bdd\in\hb^n(\GL_n(\RR), \RRo)$ defined by $\itu$ coincides with $(-1)^{n/2}\, 2^{-n}[q\pcoc]\bdd$.
\end{prop}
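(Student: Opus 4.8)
The plan is to show that $\itu$ and $(-1)^{n/2}2^{-n}q\pcoc$ define the same bounded cohomology class by exhibiting them both as cocycles in a resolution computing $\hb^\bu(\GL_n(\RR),\RRo)$, and invoking that the relevant space is one-dimensional. Concretely, recall from Section~\ref{sec:eub} that $\hb^n(\GL_n(\RR),\RRo)$ is realized isometrically by the complex of $\GL_n(\RR)$-invariants of $L^\infty(\pr(\RR^n)^{\bu+1},\RRo)$; equivalently, one may pass through the flag space $\dr(\RR^n)$. So the first step is to rewrite $\itu$, given in the homogeneous bar resolution, as (the pull-back of) an $L^\infty$-cocycle on $\pr(\RR^n)^{n+1}$.

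First I would observe that the function $\smi$ on $(\pr(\RR^n))^{n+1}$ from Lemma~\ref{lem:Smillie} is exactly the fibre integral appearing in $\itu$: since $\sul$ is alternating under sign changes of each vector, averaging $\sul(\sigma_0 v_0,\dots,\sigma_n v_n)$ over signs is the same as integrating $\sul(g_0 v_0,\dots,g_n v_n)$ over $v_i$ ranging in a symmetric set like the unit ball $B$, up to the normalisation of the measure. More precisely, writing each $g_i v_i$ in "polar" form and using that $\sul$ is scale-invariant on each slot and $\GL_n(\RR)$-equivariant, the integral $\int_{B^{n+1}}\sul(g_0 v_0,\dots,g_n v_n)\,dv_0\cdots dv_n$ depends only on the projective points $g_i\cdot[v_i]$ through a fixed positive constant times $\smi$; by equivariance one reduces to $g_i = \mathrm{id}$ and checks the constant is $1$ (the radial integrations contribute a factor that cancels against the normalisation of the measure on $B$). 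Hence $\itu$ is the pull-back, under the map $\GL_n(\RR)^{n+1}\to\pr(\RR^n)^{n+1}$, $(g_0,\dots,g_n)\mapsto(g_0 e_1,\dots,g_n e_1)$, of the $L^\infty$-cocycle $q\smi$; equivalently $[\itu]\bdd$ is the class of $q\smi$ in the projective-space resolution.

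Next, Corollary~\ref{cor:Smillie} gives the pointwise identity $\pcoc = (-1)^{n/2}\,2^n\,\smi$ on all of $(\RR^n)^{n+1}$, hence also as $L^\infty$-function classes on $\pr(\RR^n)^{n+1}$. Therefore, in the complex $L^\infty(\pr(\RR^n)^{\bu+1},\RRo)^{\GL_n(\RR)}$ that computes $\hb^n(\GL_n(\RR),\RRo)$, the cocycle $q\smi$ equals $(-1)^{n/2}2^{-n}\,q\pcoc$ on the nose, so the two classes coincide: $[\itu]\bdd = (-1)^{n/2}2^{-n}[q\pcoc]\bdd$. One still has to make sure that the class $[q\pcoc]\bdd$ referred to here is the same as the one obtained via the flag-space resolution in Section~\ref{sec:funct}, which is automatic because the flagstaff projection $h:\dr(\RR^n)\to\pr(\RR^n)$ induces the canonical isomorphism between the two amenable resolutions and $\coc = h^*\pcoc$ generically (Corollary~\ref{cor:coc}); alternatively one cites that both resolutions compute the same isometric $\hb^n$.

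The main obstacle is the clean identification of $\itu$ with $q\smi$, i.e.\ verifying that the fibre integral over $B^{n+1}$ really collapses to the sign-average $\smi$ with constant exactly $1$ and no leftover dependence on the radial variables. This is where one must be careful: $\sul(v_0,\dots,v_n)$ is not literally constant along radial rays in each slot once we fix the other slots — it is, because $\sul$ only sees the signs $\sigma_i$ and the open cone condition "$0$ in the interior of the convex hull", both invariant under positive scaling of each $v_i$ — so the integral factors as $\big(\prod_i \int_{0}^{1} r_i^{n-1}dr_i \cdot (\text{const})\big)$ times an average of $\sul$ over the sphere in each slot, which is $2^{-(n+1)}\sum_{\sigma_i}\sul(\sigma_0 v_0,\dots)$ after normalising $B$ to total mass $1$. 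Spelling this out carefully — in particular that the null-set where $\sul$ is discontinuous does not affect the $L^\infty$-class and that equivariance lets one test at the identity — is the only real content; everything else is bookkeeping with results already proved.
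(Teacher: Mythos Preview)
Your argument contains a genuine gap at the key step. You claim that the integral $\int_{B^{n+1}}\sul(g_0 v_0,\dots,g_n v_n)\,dv$ collapses to the sign-average $\smi(g_0 e_1,\dots,g_n e_1)$, i.e.\ that $\itu$ is the pull-back of $q\smi$ under the orbit map $(g_0,\dots,g_n)\mapsto(g_0 e_1,\dots,g_n e_1)$. This is false. You are right that $\sul$ is invariant under positive rescaling in each slot, so the radial part of the integral factors out; but what remains is an integral over the \emph{whole sphere} $(S^{n-1})^{n+1}$, not over the $2^{n+1}$ antipodal pairs $(\pm u_0,\dots,\pm u_n)$ for a single fixed direction. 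After folding by the antipodal map one obtains
\[
\itu(g_0,\dots,g_n)=\int_{\pr(\RR^n)^{n+1}}\smi(g_0 v_0,\dots,g_n v_n)\,dv_0\cdots dv_n,
\]
which is still an integral over all projective directions, and in general $\itu(g_0,\dots,g_n)\neq\smi(g_0 e_1,\dots,g_n e_1)$. (Your attempted check ``by equivariance reduce to $g_i=\mathrm{id}$'' does not help: equivariance is only under the diagonal action, and at $g_i=\mathrm{id}$ both sides vanish by alternation, so no constant can be read off.)

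What the paper does instead is exactly to stop at the displayed integral above, replace $\smi$ by $(-1)^{n/2}2^{-n}\pcoc$ via Corollary~\ref{cor:Smillie}, and then recognise the resulting expression as the \emph{Poisson transform} of $q\pcoc$ (after passing to the flag space $\dr(\RR^n)$, which is a Poisson boundary for $\GL_n(\RR)$). The point you are missing is the non-trivial fact that this integral transform is a chain map implementing the canonical isomorphism between the boundary resolution and the bar resolution, hence sends $[q\pcoc]\bdd$ to the class of $\itu$; this is cited from~\cite[7.5.8]{Monod}. Without this (or some substitute, such as one-dimensionality together with an independent determination of the scalar), the identification of the two bounded classes is not established.
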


\begin{proof}
The above integral representation of~$\itu$ can be re-written as
$$\itu(g_0, \ldots, g_n) = \int_{\pr(\RR^n)^{n+1}} \smi(g_0 v_0, \ldots, g_n v_n)\, d v_0\ldots d v_n,$$
where now $d v_i$ is the normalised measure on the projective space. By Corollary~\ref{cor:Smillie},
$$(-1)^{n/2}\, 2^{n}\, \itu(g_0, \ldots, g_n) = \int_{\pr(\RR^n)^{n+1}} \pcoc(g_0 v_0, \ldots, g_n v_n)\, d v_0\ldots d v_n.$$
The latter integral is the Poisson transform of $\pcoc$, or more precisely of the pull-back of $\pcoc$ to $\dr(\RR^n)^{n+1}$, since $\dr(\RR^n)$ is a Poisson boundary for $\GL_n(\RR)$. It is a general property of Poisson transforms that this class coincides with $[q\coc]\bdd$ in bounded cohomology, see~\cite[7.5.8]{Monod}.
\end{proof}

Ivanov--Turaev proved $[\itu] = \eu$ in~\cite{Ivanov-Turaev}. Their proof is based on an analogue of Proposition~\ref{prop:AmapstoSmillie} for the cocycle $\itu$, see Theorem~2 (finite case) in~\cite{Ivanov-Turaev}. In light of Proposition~\ref{prop:IT}, the two approaches are essentially equivalent. Therefore, we could have avoided the explicit proof of Proposition~\ref{prop:AmapstoSmillie} and Corollary~\ref{cor:AmapstoSmillie} by first establishing Proposition~\ref{prop:IT} and then quoting~\cite{Ivanov-Turaev}. Conversely, Corollary~\ref{cor:AmapstoSmillie} yields an alternative proof that the Ivanov--Turaev cocycle represents the Euler class.

\bibliographystyle{../BIB/amsalpha}
\bibliography{../BIB/ma_bib}
\end{document}